\newtheorem{theorem}{Theorem}[section]
\newtheorem{proposition}[theorem]{Proposition}
\newtheorem{lemma}[theorem]{Lemma}
\newtheorem{corollary}[theorem]{Corollary}
\newtheorem{alphatheorem}{Theorem}
\theoremstyle{definition}
\newtheorem{definition}[theorem]{Definition}
\newtheorem{example}[theorem]{Example}
\newtheorem{remark}[theorem]{Remark}
\DeclareMathOperator*{\EEE}{\scalerel*{\mathbb{E}}{\textstyle\sum}}
\newcommand{\norm}[1]{\left\lVert #1 \right\rVert}
\newcommand{\ip}[1]{\left\lfloor #1 \right\rfloor }
\newcommand{\floor}[1]{\left\lfloor #1 \right\rfloor}
\newcommand{\bra}[1]{\left( #1 \right)}
\renewcommand{\bar}{\overline}
\newcommand{\abs}[1]{\left|#1\right|}
\newcommand{\set}[2]{\left\{ #1 \ \middle| \ #2 \right\} }
\newcommand{\e}{\varepsilon}
\renewcommand{\a}{\alpha}
\newcommand{\NN}{\mathbb{N}}
\newcommand{\EE}{\mathbb{E}}
\newcommand{\ZZ}{\mathbb{Z}}
\newcommand{\RR}{\mathbb{R}}
\newcommand{\CC}{\mathbb{C}}
\DeclareMathAlphabet{\mathpzc}{OT1}{pzc}{m}{it}
\newcommand{\cA}{\mathcal{A}}
\newcounter{claimcounter}
\definecolor{fresh}{HTML}{2bb101}
\definecolor{checked}{HTML}{1e5e06}
\definecolor{double}{HTML}{5E3800}
\definecolor{external}{HTML}{a81a78}
\definecolor{later}{HTML}{0410ff}
\definecolor{minor-rev}{HTML}{d96a09}
\definecolor{major-rev}{HTML}{c90000}
\definecolor{skip}{HTML}{ffffff}%
\definecolor{normal}{HTML}{000000}%
\definecolor{final}{HTML}{ffffff}
\newcommand{\bb}{\mathbf}
\renewcommand{\subset}{\subseteq}
\newcommand*\patchAmsMathEnvironmentForLineno[1]{\expandafter\let\csname old#1\expandafter\endcsname\csname #1\endcsname
  \expandafter\let\csname oldend#1\expandafter\endcsname\csname end#1\endcsname
  \renewenvironment{#1}{\linenomath\csname old#1\endcsname}{\csname oldend#1\endcsname\endlinenomath}}\newcommand*\patchBothAmsMathEnvironmentsForLineno[1]{\patchAmsMathEnvironmentForLineno{#1}\patchAmsMathEnvironmentForLineno{#1*}}\AtBeginDocument{\patchBothAmsMathEnvironmentsForLineno{equation}\patchBothAmsMathEnvironmentsForLineno{align}\patchBothAmsMathEnvironmentsForLineno{flalign}\patchBothAmsMathEnvironmentsForLineno{alignat}\patchBothAmsMathEnvironmentsForLineno{gather}\patchBothAmsMathEnvironmentsForLineno{multline}}
\newcommand{\Z}{\mathbb{Z}}
\newcommand{\rb}[1]{\left( #1 \right)}
 \DeclareMathOperator{\Exists}{\exists}
\DeclareMathOperator{\Forall}{\forall}
\begin{document}

\author[J.\ Konieczny ]{Jakub Konieczny}
\address{Universit\'e Claude Bernard Lyon 1, CNRS UMR 5208, Institut Camille Jordan,
F-69622 Villeurbanne Cedex, France}
\email{jakub.konieczny@gmail.com}

\author[C.\ M\"ullner ]{Clemens M\"ullner}
\address{Institut f\"ur Diskrete Mathematik und Geometrie TU Wien, Wiedner Hauptstr. 8-10, 1040 Wien, Austria}
\email{clemens.muellner@tuwien.ac.at}
 \thanks{During the initial work on this paper, the first-named author worked within the framework of the LABEX MILYON (ANR-10-LABX-0070) of Universit\'e de Lyon, within the program "Investissements d'Avenir" (ANR-11-IDEX-0007) operated by the French National Research Agency (ANR). Currently, he works at the University of Oxford and is supported by UKRI Fellowship EP/X033813/1. The second-named author is supported by the Austrian-French project “Arithmetic Randomness” between FWF and ANR (grant numbers I4945-N and ANR-20-CE91-0006). For the purpose of open access, the authors have applied a Creative Commons Attribution (CC BY) licence to any Author Accepted Manuscript version arising.}

\title[Arithmetical subword complexity of automatic sequences]{Arithmetical subword complexity \\ of automatic sequences}
\date{\today}
 
\begin{abstract}
	We fully classify automatic sequences $a$ over a finite alphabet $\Omega$ with the property that each word over $\Omega$ appears is $a$ along an arithmetic progression. Using the terminology introduced by Avgustinovich, Fon-Der-Flaass and Frid, these are the automatic sequences with the maximal possible arithmetical subword complexity. More generally, we obtain an asymptotic formula for arithmetical (and even polynomial) subword complexity of a given automatic sequence $a$.
\end{abstract}

\keywords{automatic sequence, arithmetical subword complexity, Gowers norm}
\subjclass[2020]{Primary: 11B85, 68R15.}

\maketitle 

\section{Introduction}
\label{sec:intro}

Automatic sequences --- that is, sequences computable by finite automata, see \cite{AlloucheShallit-book} for background --- have long been studied from the point of view of combinatorics on words. A notable property of these sequences is that their subword complexity is linear. To be more precise, for a sequence $a = (a(n))_{n=0}^\infty$ taking values in some finite alphabet $\Omega$ we define the subword complexity $p_{a}(\ell)$ to be the number of length-$\ell$ subwords that appear in $a$:
\begin{equation}\label{eq:intro:def-p_a}
	p_a(\ell) = \# \set{ x = (x(i))_{i=0}^{\ell-1} \in \Omega^\ell}
	{(\Exists n \geq 0)\ (\Forall 0 \leq i < \ell) \ a(n+i) = x(i)}.
\end{equation}
If $a$ is automatic then we have $p_a(\ell) = O_a(\ell)$, or in other words there exists a constant $C_a$ such that $p_a(\ell) \leq C_a \ell$ for all $\ell \geq 1$. In fact, in many cases the subword complexity can be computed explicitly. As an example, we may consider the Thue--Morse sequence $t$, given by 
\begin{equation}\label{eq:intro:def-TM}
	t(n) = s_2(n) \bmod 2, 
\end{equation}
where $s_2(n)$ denotes the sum of binary digits of $n$. The subword complexity of $t$ is
\begin{equation}\label{eq:intro:p_TM}
	p_t(n) = 
	\begin{cases}
  3\cdot 2^{k}+4(r-1)&{\text{if }} n = 2^k +r \text{ with } 1 \leq r \leq 2^{k-1},\\
  4\cdot 2^{k}+2(r-1)&{\text{if }} n = 2^k +r \text{ with }  2^{k-1} < r \leq 2^{k}.
	\end{cases}
\end{equation}
(This sequence appears as A005942 on OEIS \cite[A008277]{oeis}.)

However, automatic sequences can often look much more complicated along subsequences. For instance, the restriction of the Thue--Morse sequence to the squares $(t(n^2))_{n=0}^\infty$ is normal \cite{DrmotaMauduitRivat-2019}, meaning that for each $\ell \geq 1$, each subword $x \in \{0,1\}^\ell$ occurs with frequency $1/2^\ell$,
\[
	\lim_{N \to \infty} \frac{1}{N} 
	\# \set{ n < N}
	{(\Forall 0 \leq i < \ell) \ t(n+i) = x(i)} = 1/2^\ell.
\]
This result was later generalized to block-additive functions modulo $m$ by the second named author~\cite{Muellner2018}.
Moreover, a similar result applies to the restrictions of the Thue--Morse sequence to Piatetski-Shapiro sequences $(t(\ip{n^c})_{n=0}^\infty$ for $1 < c < 3/2$ \cite{MullnerSpiegelhofer-2017}. These results are conjectured to hold for larger exponents as well. It follows from \cite{Konieczny-2019-AIF} that, for a fixed value of $\ell$ and sufficiently large $N$, the restriction of the Thue--Morse sequence to a randomly chosen length-$\ell$ arithmetic progression contained in $[N] = \{0,1,\dots,N-1\}$ behaves like a random sequence in the sense that for each $\ell \geq 1$ there exists  $c(\ell) > 0$ such that for each $x = (x(i))_{i=0}^{\ell-1} \in \{0,1\}^\ell$ we have
\[
	\#\set{ (n,m) \in \NN_0^2}
	{
	\begin{array}{l}
		n+im < N \text{ and}\\
		t(n+im) = x(i) \\
		\text{for all } 0 \leq i < \ell
	\end{array}	 
	}
	= \frac{N^2}{2^{\ell+1}(\ell-1)}  + O(N^{2-c(\ell)})
\]

\newcommand{\parith}{p^{\mathrm{AP}}}

In particular, the Thue--Morse sequence has the largest possible arithmetical subword complexity --- a concept introduced by Avgustinovich, Fon-Der-Flaass and Frid \cite{AvgustinovichFlaassFrid-2000} as an analogue of the usual subword complexity, which we will presently introduce. For a sequence $a = (a(n))_{n=0}^\infty$ taking values in some finite set $\Omega$ the arithmetical subword complexity $\parith_{a}(\ell)$ is defined as the number of length-$\ell$ subwords that appear in $a$ along an arithmetic progression:
\begin{equation}\label{eq:intro:def-parith_a}
	\parith_a(\ell) = \# \set{ x \in \Omega^\ell}
	{(\Exists n \geq 0,\ m \geq 1)\ (\Forall 0 \leq i < \ell) \ a(n+im	) = x(i)}.
\end{equation}
This notion was further studied in \cite{Frid-2003-TCS,Frid-2003-WORDS,Frid-2006,AvgustinovichCassaigneFrid-2006,CassaigneFrid-2007}. We also point out that other modifications of the notion of subword complexity that have been studied include $d$-complexity \cite{Ivanyi-1987}, pattern complexity \cite{RestivoSalemi-2002} and maximal pattern complexity \cite{KamaeZamboni-2002}. In fact, we will consider an even more far-reaching generalisation, which we dub \emph{polynomial subword complexity}, counting the number of subwords which appear along polynomials of some degree $d$:
\begin{equation}\label{eq:intro:def-p_<d}
	p^{\leq d}_{a}(\ell) = \# \set{ x \in \Omega^\ell}
	{
	\begin{array}{l}
	(\Exists P(X) \in \RR[X])\ P(\NN_0) \subset \NN_0,\ \deg P \leq d,\\
	(\Forall 0 \leq i < \ell) \ a(P(i)) = x(i)
	\end{array}	
	}.
\end{equation}
We point out that, because the definition of $\parith_a$ in \eqref{eq:intro:def-p_a} includes the requirement that $m \neq 0$, setting $d = 1$ in \eqref{eq:intro:def-p_<d} we do not exactly recover $\parith_a$. Nonetheless, we have $\parith_{a}(\ell) \leq p^{\leq 1}(\ell) \leq \parith_{a}(\ell) + \#\Omega$. Moreover, we have the chain of inequalities
\[
	p_{a}(\ell) \leq \parith_{a}(\ell) \leq p^{\leq 1}_{a}(\ell) \leq p^{\leq 2}_{a}(\ell) \leq p^{\leq 3}_{a}(\ell) \leq \dots \leq \#\Omega^\ell.
\] 
We will say that a sequence $a$ taking values in a finite set $\Omega$ has \emph{maximal arithmetical subword complexity} if
\[
\parith_a(\ell) = \#\Omega^\ell \text{ for all } \ell \geq 1.
\]
As mentioned above, this property is enjoyed by the Thue--Morse sequence. More examples can also be found in \cite{AvgustinovichFlaassFrid-2000} and \cite{Frid-2003-TCS}.

At the opposite extreme, it is possible for an automatic sequence to have very low arithmetical or polynomial subword complexity. As a basic example, if $a$ is periodic with period $q \geq 1$ then for each $d \geq 1$ we have
\[ 
	p^{\leq d}_{a}(\ell) \leq q^{d+1},
\]
which is a direct consequence of the fact that the sequence $\a_0 + \a_1 i + \dots \a_d i^d \bmod q$ ($i \in \NN_0$) is completely determined by its initial $d+1$ terms. A less trivial example concerns (forwards) synchronising sequences, that is, automatic sequences $a$ with the property that there exists a word $w \in \Sigma_k^*$ such that for $u,v \in \Sigma_k^*$, the value $a([uwv]_k)$ depends only on $v$ (and hence is equal to $a([wv]_k)$. Here, $\Sigma_k = \{0,1,\dots,k-1\}$ denotes the set of base-$k$ digits, $\Sigma_k^*$ denotes the set of all words over $\Sigma_k$, and $[u]_k$ for $u \in \Sigma_k^*$ denotes the integer encoded by $u$. 
For such sequences it was proved by Deshouillers, Drmota, Shubin, Spiegelhofer and the second author in~\cite{Deshouillers2022},
\begin{align}\label{eq:intro:DDMSS}
	p^{\leq d}_{a}(\ell) \leq \exp{o(\ell)},
\end{align}
which is in stark contrast to the behaviour of other automatic sequences such as the Thue--Morse sequence.
This was an important intermediate goal to being able to study the subword complexity of synchronizing automatic sequences along $\floor{n^c}$ or more generally along Hardy sequences of polynomial growth, which was the main motivation in~\cite{Deshouillers2022}.
Finally, we mention backwards synchronising sequences, that is, automatic sequences $a$ with the property that there exists a word $w \in \Sigma_k^*$ such that for $u,v \in \Sigma_k^*$, the value $a([uwv]_k)$ depends only on $u$ (and hence is equal to $a([uw]_k)$. While arithmetical subword complexity of such sequences has not been previously studied, in analogy with the results in \cite{Deshouillers2022} it will not come as a surprise that \eqref{eq:intro:DDMSS} holds also in this case.

Our goal is to obtain a description of the asymptotic behaviour of the arithmetic (and polynomial) subword complexity for an arbitrary automatic sequence $a$. Motivated by the examples mentioned above, we introduce the family $\mathcal{AP}_k$ consisting of all sets $P$ of the form
\begin{equation}\label{eq:def:AP_k}
	P = \set{ n \in \NN_0}{
	\begin{array}{l}
	 \text{ the base-$k$ expansion of $n$ begins with $u$, ends with $v$,} \\
	 \text{ has length $\equiv \ell \bmod m$, and $n \equiv c \bmod q$}
	 \end{array}
	 }
\end{equation}
where $u,v \in \Sigma_k^*$, $u$ does not begin with $0$, $0 \leq \ell < m$ and $0 \leq c < q$ are integers with $q$ coprime to $k$. We think of these sets as a generalisation of the notion of a residue class, but additionally accounting for the behaviour of the base-$k$ expansion. We note that this notion is stable under change of base in the sense that if $K$ is a power of $k$ then $\mathcal{AP}_K \subset \mathcal{AP}_k$ and each set in $\mathcal{AP}_k$ is a finite union of sets in $\mathcal{AP}_K$.

With this notion in place, we are ready to introduce the parameter which controls the arithmetical subword complexity of an automatic sequence. 

\begin{definition}\label{def:eff-alph}
	Let $a = (a(n))_{n=0}^\infty$ be a $k$-automatic sequence taking values in a finite set $\Omega$. The \emph{effective alphabet size} of $a$ is the largest integer $r$ with the property that there exists $P \in \mathcal{AP}_k$ such that $a$ takes at least $r$ different values on each $Q \in \mathcal{AP}_k$ with $Q \subset P$.
	\end{definition}

In fact, since in the definition above we can freely replace $P$ with a smaller element of $\mathcal{AP}_k$, if $a$ is a $k$-automatic sequence with effective alphabet size $r$ then there exists $\Theta \subset \Omega$ with $\# \Theta = r$ and $P \in \mathcal{AP}_k$ such that for each $Q \in \mathcal{AP}_k$ with $Q \subset P$ the set of values taken by $a$ on $Q$ is precisely $\Theta$. Additionally, $r$ is the largest integer with this property.

The rationale behind the name ``effective alphabet size'' is that, with notation as in Definition \ref{def:eff-alph}, for each $\e > 0$ we can find a partition $\NN_0 = P_1 \cup P_2 \cup \dots \cup P_N \cup Q_1 \cup Q_2 \cup \dots \cup Q_M$ into elements of $\mathcal{AP}_k$ such that $a$ takes at most $r$ distinct values of each $P_i$ ($1 \leq i \leq N$) and $\bar{d}(Q_1 \cup Q_2 \cup \dots \cup Q_M) < \e$. (We do not prove this result since we do not rely on it, but it is not hard to obtain it using techniques used in this paper.) Thus, up to a negligible error, one can think of $a$ as the result of ``glueing together'' sequences on alphabets of size $r$.
	
\begin{example}
\begin{enumerate}
\item If $a$ is periodic, forwards synchronising or backwards synchronising, then $r(a) = 1$.
\item If $a$ takes the form $a(n) = F(b(n),c(n))$ for $k$-automatic sequences $b$ and $c$ and a map $F$, then $r(a)  \leq r(b)r(c)$.
\item If $r(a) = 1$ then $a$ is \emph{strongly structured} in the sense of \cite{ByszewskiKoniecznyMullner}, see Section \ref{sec:Prelim:Gowers} for further discussion.
\end{enumerate}
\end{example}

We are now ready to state our main result.

\begin{alphatheorem}\label{thm:main-B}
	Let $a = (a(n))_{n=0}^\infty$ be a $k$-automatic sequence with effective alphabet size $r$ (cf.\ Definition \ref{def:eff-alph}). Then for each $d \in \NN$ we have
	\begin{align*}
		r^{\ell} \leq \parith_a(\ell) \leq p^{\leq d}_{a}(\ell) \leq r^{\ell}\exp\bra{o(\ell)}.
	\end{align*}
\end{alphatheorem}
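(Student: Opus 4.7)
The middle inequality $\parith_a(\ell) \leq p^{\leq d}_a(\ell)$ follows from the chain of inequalities displayed just before the theorem; only the two outer bounds require work.

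\medskip

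\noindent\emph{Lower bound.} Let $P \in \mathcal{AP}_k$ with parameters $u,v,\ell_P,m_P,c,q$, and $\Theta \subset \Omega$ with $\#\Theta = r$, be as in the remark following Definition~\ref{def:eff-alph}: the sequence $a$ realises every value in $\Theta$ on every $\mathcal{AP}_k$-subset of $P$. Given $x \in \Theta^\ell$, I aim to exhibit an arithmetic progression $(n+im)_{i=0}^{\ell-1}$ on which $a$ reproduces $x$. The plan is to take the common difference of the form $m = q k^{T}$ for $T$ sufficiently large: then every term $n+im$ is congruent to $n$ modulo $q$, and its base-$k$ expansion agrees with that of $n$ on the lowest $T$ digits, while the digits above position $T$ become the digits of $n_{\mathrm{high}} + i$ where $n = n_{\mathrm{low}} + qk^T n_{\mathrm{high}}$. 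Fixing $n_{\mathrm{low}}$ so that its bottom digits form $v$, and fixing $n_{\mathrm{high}}$ so that its top digits form $u$ (with overall length respecting $\ell_P \bmod m_P$), places every $n+im$ in $P$. The remaining medium-order digits of $n$ are treated as an ``address'' used to steer each term $n+im$ into a prescribed sub-$\mathcal{AP}_k$ set $Q_i\subset P$; since $a$ attains $x(i) \in \Theta$ on $Q_i$ and further refinement preserves this property, a suitable choice of address digits secures $a(n+im) = x(i)$ simultaneously for every $i$. This produces $r^\ell$ pairwise distinct subwords.

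\medskip

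\noindent\emph{Upper bound.} The plan is to combine the decomposition of $\NN_0$ into $\mathcal{AP}_k$ sets alluded to after Definition~\ref{def:eff-alph} with a counting argument indexed by ``types''. For any $\e > 0$ there is a finite partition $\NN_0 = P_1 \sqcup \cdots \sqcup P_N \sqcup R$ with each $P_j \in \mathcal{AP}_k$, with $a|_{P_j}$ taking at most $r$ values, and with $\bar d(R) < \e$. To each polynomial $P(X) \in \RR[X]$ of degree $\leq d$ with $P(\NN_0) \subset \NN_0$, associate the \emph{type} $\tau(P) \in \{1,\dots,N,\star\}^\ell$ by setting $\tau(P)_i = j$ if $P(i) \in P_j$ and $\tau(P)_i = \star$ if $P(i) \in R$. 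Once $\tau(P)$ is fixed, the subword $(a(P(i)))_{i=0}^{\ell-1}$ takes at most $r^{\ell-s}(\#\Omega)^s$ possible values, where $s$ is the number of $\star$-coordinates.

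\medskip

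The main obstacle — and the principal technical work — is bounding the number of realisable types by $\exp(o(\ell))$. Since each $P_j$ is defined by finitely many congruences and base-$k$ digit conditions, membership ``$P(i) \in P_j$'' is governed by a bounded-complexity system of generalised-polynomial (bracket) conditions in $i$ and in the coefficients of $P$; this should yield a sub-exponential count of types, in the same spirit as the linear bound $p_a(\ell) = O(\ell)$ for ordinary $k$-automatic subword complexity. The contribution from types with many $\star$-coordinates is controlled separately by a binary-entropy estimate of the form $\binom{\ell}{s}(\#\Omega/r)^s$ together with the density bound $\bar d(R) < \e$, which limits the number of typical $\star$-entries. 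Sending $\e = \e(\ell) \to 0$ slowly as $\ell \to \infty$ then gives the desired bound $p^{\leq d}_a(\ell) \leq r^\ell \exp(o(\ell))$.
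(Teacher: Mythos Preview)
Both halves of your sketch have substantive gaps.

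\emph{Lower bound.} The explicit construction cannot work as described. With $m=qk^{T}$ and $n=n_{0}+k^{T}n'$ ($0\le n_{0}<k^{T}$) one gets $n+im=n_{0}+k^{T}(n'+iq)$: the low $T$ digits are fixed and the remaining digits are those of $n'+iq$. You choose $n'$ once; the $\ell$ integers $n',n'+q,\dots,n'+(\ell-1)q$ do not have independently prescribable digit blocks, so there is no mechanism by which ``address digits'' of a single $n'$ place $n+im$ into a separately chosen $Q_i$ for every $i$. And even if that succeeded, landing in $Q_i$ only guarantees that $a$ attains $x(i)$ \emph{somewhere} in $Q_i$, not at the specific point $n+im$; pinning down the value at that one point forces a further refinement, and doing this for all $i$ simultaneously is exactly the difficulty you are trying to finesse. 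The paper makes no attempt at an explicit progression: it uses Theorem~\ref{thm:BKM} and Lemma~\ref{lem:FCAE-positive} to find $Q\in\mathcal{AP}_k$ on which every $1_{a^{-1}(\omega),\mathrm{str}}$, $\omega\in\Theta$, is a positive constant, and then the generalised von Neumann bound (Lemma~\ref{lem:suff:counting}) shows that $\gg N^{2}$ pairs $(n,m)$ realise $x$. Higher-order Fourier analysis is doing essential work here.

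\emph{Upper bound.} First, the partition you invoke is asserted after Definition~\ref{def:eff-alph} without proof; establishing it is essentially equivalent to the automaton-theoretic identity $r(a)=\max_{i,j}\#\tau(S_{i,j})$ of Proposition~\ref{prop:alph:char-of-r}, which is one of the paper's main technical steps. Second, and more seriously, $\bar d(R)<\varepsilon$ gives no control over $\#\{i<\ell:P(i)\in R\}$ for a \emph{fixed} polynomial $P$. If some cell of $R$ is of the form $\{n:(n)_k\text{ begins with }u\}$ --- perfectly consistent with $\bar d(R)<\varepsilon$ once $|u|$ is large --- then $P(i)=[u]_k k^{L}+i$ has $P(i)\in R$ for every $i<k^{L}$, and the number of $\star$-coordinates is $\ell$. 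No ``typical'' or entropy argument applies; you must bound the worst case over all $P$. Proving $s_{\max}=o(\ell)$ for a well-chosen $R$ is precisely the content of Lemma~\ref{lem:alpha:partition}. The paper's route is different: in the primitive case every integer carries a type $(i(n),j(n))$ with $\#\tau(S_{i(n),j(n)})\le r$ and there is \emph{no} exceptional set, so $p_{a}^{\le d}(\ell)\le r^{\ell}\,p_{(i,j)}^{\le d}(\ell)$ directly; the general case is then reduced to the primitive one via Lemma~\ref{lem:alpha:partition}.
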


\begin{remark}
	Using similar methods, one could obtain a more precise upper bound of the form $r^{\ell}\exp\bra{O(\ell^\eta)}$ for some $\eta < 1$; cf.\ Remark \ref{remark:error}. For the sake of exposition, we prove a slightly weaker bound $r^{\ell}\exp\bra{o(\ell)}$, which allows us to avoid some technical computations.
\end{remark}

As alluded to earlier, a particularly interesting case of Theorem \ref{thm:main-B} is when $r(a) = \# \Omega$, meaning that the sequence $a$ has maximal arithmetical subword complexity.

\begin{corollary}\label{cor:char-MASC}
	Let $a = (a(n))_{n=0}^\infty$ be a $k$-automatic sequence taking values in a finite set $\Omega$. Then $a$ has maximal arithmetical subword complexity if and only if there exists $P \in \mathcal{AP}_k$ such that for each $Q \in  \mathcal{AP}_k$ with $Q \subset P$ and each $x \in \Omega$ there exists $n \in Q$ with $a(n) = x$.
\end{corollary}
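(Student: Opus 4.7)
The plan is to derive the corollary directly from Theorem \ref{thm:main-B} once we observe that the right-hand condition is a reformulation of the statement $r(a) = \#\Omega$, where $r(a)$ denotes the effective alphabet size from Definition \ref{def:eff-alph}.

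First, I would establish the following equivalence: there exists $P \in \mathcal{AP}_k$ such that for every $Q \in \mathcal{AP}_k$ with $Q \subset P$ and every $x \in \Omega$ there is some $n \in Q$ with $a(n) = x$, if and only if $r(a) = \#\Omega$. One direction is immediate from Definition \ref{def:eff-alph}: the hypothesis forces $a$ to attain at least $\#\Omega$ distinct values on every $Q \subset P$ in $\mathcal{AP}_k$, so $r(a) \geq \#\Omega$, and the reverse inequality $r(a) \leq \#\Omega$ is trivial. For the converse, I would invoke the observation recorded in the paragraph immediately following Definition \ref{def:eff-alph}: for any $k$-automatic sequence $a$ there exist $\Theta \subset \Omega$ with $\#\Theta = r(a)$ and $P \in \mathcal{AP}_k$ such that for every $Q \in \mathcal{AP}_k$ with $Q \subset P$ the set of values taken by $a$ on $Q$ equals $\Theta$. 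When $r(a) = \#\Omega$ this forces $\Theta = \Omega$, which is exactly the condition in the corollary.

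Second, I would feed this equivalence into Theorem \ref{thm:main-B}, say with $d = 1$. If $r(a) = \#\Omega$, the lower bound $r(a)^\ell \leq \parith_a(\ell)$ combined with the trivial bound $\parith_a(\ell) \leq \#\Omega^\ell$ forces equality for every $\ell \geq 1$, so $a$ has maximal arithmetical subword complexity. Conversely, if $r(a) < \#\Omega$, the upper bound $\parith_a(\ell) \leq r(a)^\ell \exp\bra{o(\ell)}$ gives
\[
    \frac{\parith_a(\ell)}{\#\Omega^\ell} \leq \bra{\frac{r(a)}{\#\Omega}}^\ell \exp\bra{o(\ell)} \xrightarrow[\ell \to \infty]{} 0,
\]
so $\parith_a(\ell) < \#\Omega^\ell$ for all sufficiently large $\ell$, ruling out maximal arithmetical subword complexity.

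There is essentially no obstacle at this stage: the corollary is a formal consequence of Theorem \ref{thm:main-B}, and all the genuine combinatorial and analytic difficulty is absorbed into the proof of that theorem. The only minor subtlety is the use of the refinement after Definition \ref{def:eff-alph} to produce a single $P$ on which the value set $a(Q)$ is exactly $\Theta$ for every $Q \in \mathcal{AP}_k$ with $Q \subset P$, rather than merely a set of size at least $r(a)$; this is what makes the equivalence with the condition ``$a(Q) \supseteq \Omega$'' clean.
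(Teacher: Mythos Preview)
Your proposal is correct and matches the paper's approach: the paper does not give an explicit proof of Corollary~\ref{cor:char-MASC} but presents it as the special case $r(a)=\#\Omega$ of Theorem~\ref{thm:main-B}, which is precisely the reduction you carry out. Your handling of both directions via the lower and upper bounds in Theorem~\ref{thm:main-B}, together with the refinement following Definition~\ref{def:eff-alph}, is exactly what is needed.
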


The criterion in Corollary \ref{cor:char-MASC} may come across as somewhat complicated. In the case where the alphabet $\Omega$ has two elements, we have a simple sufficient condition.

\begin{corollary}\label{cor:main-A}
	Let $a = (a(n))_{n=0}^\infty$ be a $k$-automatic sequence taking values in $\{0,1\}$ and suppose that there exists $\a \in (0,1)$ such that for all $A > B \geq 0$ we have
\[
	\frac{1}{N} \sum_{n=0}^{N-1} a(An+B) \to \a \text{ as } N \to \infty.
\]
	Then $a$ has maximal arithmetical subword complexity.
\end{corollary}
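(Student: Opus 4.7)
The plan is to show, using the hypothesis on all arithmetic progressions, that every $Q \in \mathcal{AP}_k$ contains preimages of both $0$ and $1$ under $a$; Corollary \ref{cor:char-MASC} (applied with any $P \in \mathcal{AP}_k$, e.g.\ $P = \NN_0$ corresponding to trivial parameters $u = v = \varepsilon$, $m = q = 1$, $\ell = c = 0$) will then immediately yield that $a$ has maximal arithmetical subword complexity.

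Fix $Q \in \mathcal{AP}_k$ with parameters $(u, v, \ell, m, c, q)$ and, for each $j \geq |u| + |v|$ with $j \equiv \ell \pmod{m}$, let $Q_j$ denote the set of $n \in Q$ whose base-$k$ expansion has length exactly $j$. Unpacking the definition of $\mathcal{AP}_k$ gives
\[
	Q_j = \fpbig{ A_j + Dt : 0 \leq t < T_j },
\]
where $D := q k^{|v|}$ is independent of $j$, $T_j$ is of order $k^{j-|u|-|v|}/q$, and $A_j$ is the smallest element of $Q_j$. By construction $A_j \equiv [v]_k \pmod{k^{|v|}}$ and $A_j \equiv c \pmod{q}$; since $\gcd(k,q) = 1$, the Chinese remainder theorem forces $A_j \bmod D$ to equal a single residue $r^* \in \fpbig{0, 1, \dots, D-1}$ independent of $j$. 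Hence $Q_j$ consists of $T_j$ consecutive terms of the progression $(Dn + r^*)_{n \geq 0}$, starting at index $q_j := (A_j - r^*)/D$.

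The final step is to apply the hypothesis with $A = D$ and $B = r^*$ (note $D > r^* \geq 0$), obtaining $\tfrac{1}{N}\sum_{n < N} a(Dn + r^*) \to \alpha$ as $N \to \infty$. Rewriting $\sum_{n \in Q_j} a(n) = \sum_{q_j \leq n < q_j + T_j} a(Dn + r^*)$ as a difference of two partial sums, and observing that $q_j / T_j \leq k^{|u|}$ is uniformly bounded in $j$, a standard subtraction estimate yields $\tfrac{1}{T_j}\sum_{n \in Q_j} a(n) \to \alpha$ as $j \to \infty$. Since $\alpha \in (0,1)$, for all sufficiently large $j$ the set $Q_j \subset Q$ must contain preimages of both $0$ and $1$ under $a$, as required. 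The only technically delicate point is this density transfer: the hypothesis supplies only a Ces\`aro limit along the whole progression $(Dn + r^*)_{n \geq 0}$, whereas one needs to exploit it on the shifted window $[q_j, q_j + T_j)$ whose left endpoint tends to infinity with $j$; the boundedness of $q_j / T_j$ in $j$ is precisely what makes this shift harmless.
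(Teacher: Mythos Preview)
Your argument is correct, and it takes a different route from the paper's. The paper deduces $r(a)=2$ by invoking the structured/uniform decomposition of Theorem~\ref{thm:BKM}: from the hypothesis one gets that $(a-\alpha)_{\mathrm{str}}=0$ almost everywhere, hence $1_{a^{-1}(1),\mathrm{str}}$ and $1_{a^{-1}(0),\mathrm{str}}$ are a.e.\ equal to $\alpha$ and $1-\alpha$ respectively, and then Lemma~\ref{lem:FCAE-positive} (the equivalence \ref{it:18:C}$\Leftrightarrow$\ref{it:18:A}) yields the condition of Corollary~\ref{cor:char-MASC}. You instead verify that condition by hand: slicing an arbitrary $Q\in\mathcal{AP}_k$ into its length-$j$ layers $Q_j$, identifying each $Q_j$ as a block of consecutive terms of a single progression $(Dn+r^*)_{n\geq 0}$, and transferring the Ces\`aro limit $\alpha$ to that block via the bounded-ratio subtraction trick. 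This bypasses both Theorem~\ref{thm:BKM} and Lemma~\ref{lem:FCAE-positive} for the verification step, which is a nice economy; the price is a small amount of explicit bookkeeping (CRT to pin down $r^*$, and the estimate $q_j/T_j=O_{u}(1)$ together with $q_j\to\infty$). Both proofs still rely on Corollary~\ref{cor:char-MASC}, and hence on the lower bound in Theorem~\ref{thm:main-B}, so the higher-order Fourier analysis is not eliminated globally---only from this particular reduction.
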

By the same token, if $a$ instead takes values in a finite set $\Omega$ then for $a$ to have maximal arithmetical subword complexity it is enough that for each $x \in \Omega$ there exists $\a_x \in (0,1)$ such that for all $A > B \geq 0$ we have
\[
	\frac{1}{N} \# \set{ n < N}{ a(An+B) = x} \to \a_x \text{ as } N \to \infty.
\]
If the condition above is true, one might say that the sequence $a$ is \emph{totally asymptotically equidistributed} with respect to the measure on $\Omega$ given by $(\a_x)_{x \in \Omega}$.

Inspecting the proof of Theorem \ref{thm:main-B} we also notice that the conditions therein guarantee not only maximal arithmetical subword complexity but also positive frequency of all subwords.
\begin{corollary}\label{cor:main-B}
	Let $a = (a(n))_{n=0}^\infty$ be a $k$-automatic sequence with maximal arithmetical subword complexity, taking values in a finite set $\Omega$. Then for each $\ell \geq 1$ and $x \in \Omega^\ell$ we have 
\[
	\liminf_{N \to \infty} \frac{1}{N^2} 
	\#\set{ (n,m) \in \NN_0^2 }
	{
		\begin{array}{l}
		n+im < N \text{ and }
		a(n+im) = x(i)\\
		\text{for all } 0 \leq i < \ell
		\end{array}
	} > 0.
\]
\end{corollary}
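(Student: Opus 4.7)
The statement strengthens the lower bound $\parith_a(\ell) \geq r^\ell = \#\Omega^\ell$ of Theorem \ref{thm:main-B}, which only produces some AP realizing each $x \in \Omega^\ell$, into a positive-density statement. The plan is to inspect the lower-bound construction used in the proof of Theorem \ref{thm:main-B} and verify that it in fact yields $\gg N^2$ good pairs $(n,m)$ for every target word, rather than a single one.

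By Corollary \ref{cor:char-MASC}, pick $P \in \mathcal{AP}_k$ such that on every sub-$\mathcal{AP}_k$-set $Q \subset P$ the sequence $a$ attains every value in $\Omega$. The first step is to upgrade this qualitative property to the quantitative statement that for every such $Q$ and every $v \in \Omega$ the set $\{n \in Q : a(n) = v\}$ has positive density in $Q$. The density exists because $\mathbf{1}_{a = v}$ is itself $k$-automatic and hence admits a density on every element of $\mathcal{AP}_k$; and it must be strictly positive, because otherwise one could exhibit a sub-$\mathcal{AP}_k$-set of $Q$ on which $a$ never attains $v$, contradicting Corollary \ref{cor:char-MASC}. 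Fix now $x \in \Omega^\ell$ and choose $\mathcal{AP}_k$-sets $Q_0, \dots, Q_{\ell-1} \subset P$ together with congruence conditions on $(n,m)$ that guarantee $n + im \in Q_i$ for all $i < \ell$; the number of admissible pairs with $n + (\ell-1)m < N$ is then $\gg N^2$ by direct counting.

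The principal difficulty is the simultaneity of the $\ell$ fiber constraints: one pair $(n,m)$ must verify $a(n+im) = x(i)$ for every $i$ at once. This is what the Gowers-uniformity machinery developed in the paper for Theorem \ref{thm:main-B} is designed to handle: replacing $a$ by a structured approximation $\tilde a$ introduces only an $o(N^2)$ error in the AP count via the generalized von Neumann theorem, while the count for $\tilde a$ is of order $N^2$ with a strictly positive constant, by combining the positive-density statement of the previous paragraph with the joint equidistribution of arithmetic progressions across products of $\mathcal{AP}_k$-sets. The lower bound of the corollary is therefore a by-product of the same analysis that underlies Theorem \ref{thm:main-B}, and the main obstacle is precisely verifying that the Gowers-norm decorrelation leaves a positive main term rather than a sign-sensitive quantity.
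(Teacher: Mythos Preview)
Your outline is essentially the paper's own argument: in Section~\ref{sec:suff} the estimate $\#S(x,N) \gg N^2$ is established exactly by decomposing each $1_{a^{-1}(x(i))}$ via Theorem~\ref{thm:BKM}, applying Lemma~\ref{lem:suff:counting} to discard the uniform parts, and bounding the structured main term from below; the paper explicitly remarks that ``This estimate also yields Corollary~\ref{cor:main-B}''.

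There is one point where your sketch is muddled compared to the paper. You propose to ``choose $\mathcal{AP}_k$-sets $Q_0,\dots,Q_{\ell-1} \subset P$ together with congruence conditions on $(n,m)$ that guarantee $n+im \in Q_i$''. But congruence conditions on $(n,m)$ cannot force $n+im$ into a prescribed element of $\mathcal{AP}_k$, since such sets carry constraints on leading and trailing base-$k$ digits, not just residues. The paper avoids this by iterating Lemma~\ref{lem:FCAE-positive} to produce a \emph{single} set $Q \in \mathcal{AP}_k$ on which every structured part $1_{a^{-1}(\omega),\mathrm{str}}$ is \emph{constant}, equal to some $\alpha_\omega > 0$. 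Constancy, not merely positive density, is what makes the main term transparently bounded below by $\delta \cdot \#\{(n,m) : n+im \in Q \cap [N] \text{ for all } i\}$ with $\delta = \min_\omega \alpha_\omega^\ell$; the latter count is $\gg N^2$ (one final application of the counting lemma, since $1_Q$ is itself structured). Your positive-density statement is equivalent to this constancy via Lemma~\ref{lem:FCAE-positive}, so the argument can be completed along your lines, but the step is not explicit in your proposal.
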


\subsection*{Acknowledgements} The authors wish to thank Anna Frid for helpful comments on arithmetical subword complexity, and to Boris Adamczewski for inspiring conversations. 

\subsection*{Notation}

We let $\NN = \{1,2,\dots\}$ denote the set of positive integers and put $\NN_0 = \NN \cup \{0\}$. For $N \in \NN$ we let $[N] = \{0,1,2,\dots,N-1\}$ denote the length-$N$ initial interval of $\NN_0$.
We usually let $k$ denote the base in which we work; thus $k$ is an integer with $k \geq 2$. We let $\Sigma_k = \{0,1,\dots,k-1\}$ denote the set of base-$k$ digits, and $\Sigma_k^*$ denote the set of words over $\Sigma_k$. For $u \in \Sigma_k$ we let $\abs{u}$ denote the length of $u$. For $u \in \Sigma_k$, $[u]_k \in \NN_0$ denotes the corresponding integer, and for $n \in \NN_0$, $(n)_k \in \Sigma_k^*$ denotes the base-$k$ expansion of $n$ without any leading zeros. In particular, $(0)_k = \epsilon$ is the empty word, and $[(n)_k]_k = n$ for all $n \in \NN_0$. 
We use standard asymptotic notation, such as $O(\cdot)$ and $\gg$.
This includes the notation $f(n) = \Theta(g(n))$ for $f(n) = O(g(n))$ and $g(n) = O(f(n))$.

\section{Preliminaries} \label{sec:Prelim}

\subsection{Automata}\label{sec:Prelim:Auto}
A \emph{deterministic $k$-automaton with output} (or simply an \emph{automaton} if there is no risk of confusion) is a sextuple $\cA = (S,s_0,\Sigma_k,\delta,\Omega,\tau)$ where $S$ is a finite set of states, $s_0 \in S$ is the initial state, $\delta \colon S \times \Sigma_k \to S$ is the transition function, $\Omega$ is the output set, and $\tau \colon S \to \Omega$ is the output function. We extend $\delta$ to a map $\delta \colon S \times \Sigma_k^* \to S$ by declaring $\delta(s,uv) = \delta(\delta(s,u),v)$. The automaton $\cA$ computes the sequence $a_{\cA}$ given by $a(n) = \tau(\delta(s_0,(n)_k))$.

The automaton $\cA$ is \emph{strongly connected} if the underlying directed graph enjoys this property, meaning that for each $s,s' \in S$ there exists $u \in \Sigma_k^*$ such that $\delta(s,u) = s'$. The automaton $\cA$ is \emph{primitive} if it is strongly connected and $\gcd\bra{ \set{\abs{u}}{u \in \Sigma_k^*,\ \delta(s_0,u) = s_0} } = 1$. A \emph{strongly connected component} of $\cA$ is a maximal subset of states such that the corresponding directed graph is strongly connected. A \emph{final component} is a strongly connected component from which no other strongly connected component is reachable.

The automaton $\cA$ is \emph{synchronising} if there exists a state $s_1 \in S$ and a word $w \in \Sigma_k^*$ (sometimes called a synchronising word) such that $\delta(s,w) = s_1$ for all $s \in S$. An automatic sequence is (forwards) synchronising if it is produced by a synchronising automaton. Likewise, a sequence is backwards synchronising if it is produced by a synchronising automaton reading input starting with the least significant digit. Alternative characterisations of these notions, already mentioned in the introduction, are given in \cite[Lem.\ 3.2]{ByszewskiKoniecznyMullner}.

A set $A \subset \NN_0$ is $k$-automatic if the corresponding indicator function $1_A$ is automatic. We let $d_{\log}(A) = \lim_{N \to \infty} {1}/{(\log N)} \sum_{n=0}^{N-1} 1_A(n)/(n+1)$ denote the logarithmic density of a set $A$; the logarithmic density exists for all automatic sets.

\subsection{Higher order Fourier analysis}\label{sec:Prelim:Gowers}

Our argument hinges on a decomposition constructed by J.\ Byszewski and the authors in \cite{ByszewskiKoniecznyMullner}. For a map $f \colon [N] \to \CC$ and an integer $d \geq 1$ we define the corresponding Gowers norm
\[
	\norm{f}_{U^{d}[N]} = 
	\bra{
		\EEE_{\bb {n}}
		\prod_{\omega \in \{0,1\}^d}\mathcal{C}^{\abs{\omega}} f(n_0 + \omega_1 n_1 + \dots + \omega_d n_d)
		}^{1/2^{d}},
\]
where $\EE_{\bb n}$ denotes the average over all $\bb n = (n_0,n_1,\dots,n_s) \in \ZZ^{d+1}$ such that $n_0 + \omega_1 n_1 + \dots + \omega_d n_d \in [N]$ for all $\omega = (\omega_1,\omega_2,\dots,\omega_d) \in \{0,1\}^d$, $\abs{\omega}$ denotes the number of indices $i$ such that $\omega_i = 1$, and $\mathcal{C}$ denotes the  complex conjugation. For a comprehensive discussion on Gowers norms we refer to \cite{Tao-book}. A brief introduction, adapted to the current application, can also be found in \cite[Sec.\ 2]{ByszewskiKoniecznyMullner}. With this piece of notation in hand, we are ready to state the main result of \cite{ByszewskiKoniecznyMullner}

\begin{theorem}\label{thm:BKM}
	Let $a = (a(n))_{n=0}^\infty$ be a complex-valued $k$-automatic sequence. Then there exists a decomposition $a = a_{\mathrm{str}} + a_{\mathrm{uni}}$ where:
\begin{enumerate}
	\item $a_{\mathrm{uni}}$ is Gowers uniform in the sense that for each $s \geq 1$ there exists $c(s) > 0 $ such that $\norm{a_{\mathrm{uni}}}_{U^{s+1}[N]} = O(N^{-c(s)})$ as $N \to \infty$;
	\item $a_{\mathrm{str}}$ is structured in the sense that there exists an integer $K$ that is a power of $k$, a periodic sequence $a_{\mathrm{per}}$ with period coprime to $k$, a $K$-automatic forwards synchronising sequence $a_{\mathrm{fs}}$, and a $K$-automatic backwards synchronising sequence $a_{\mathrm{bs}}$, taking values in some finite sets $\Omega_{\mathrm{per}}$, $\Omega_{\mathrm{fs}}$, $\Omega_{\mathrm{bs}}$ respectively, as well as a map $F \colon \Omega_{\mathrm{per}} \times \Omega_{\mathrm{fs}} \times \Omega_{\mathrm{bs}} \to \CC$, such that $a_{\mathrm{str}}(n) = F(a_{\mathrm{str}}(n),a_{\mathrm{fs}}(n),a_{\mathrm{bs}}(n))$ for all $n \geq 0$.
\end{enumerate} 
\end{theorem}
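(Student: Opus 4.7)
The plan is to analyze the $k$-automaton $\cA = (S,s_0,\Sigma_k,\delta,\Omega,\tau)$ computing $a$ directly and to isolate contributions coming from the highest-order digits, the lowest-order digits, and residues modulo numbers coprime to $k$ from a residual ``generic'' part that can be shown to be Gowers uniform by carry-propagation arguments. To begin, I would reduce to the primitive case: replacing $k$ by a sufficiently large power $K = k^m$ and relabelling, one arranges that every final strongly connected component of $\cA$ is primitive and that transient states are visited only for the most significant few digits of $(n)_k$. The dependence on those initial digits can be packaged into an auxiliary backwards synchronising sequence, while, symmetrically, a forwards synchronising sequence absorbs the dependence on the least significant digits that determine the terminal state reached. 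After this repackaging only the behaviour deep inside a primitive component remains.

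Next, I would exploit the representation theory of the transition monoid generated by the digit maps $\delta(\cdot,i) \colon S \to S$ acting on $\CC^S$. Writing $a(n)$ as a matrix product along the base-$k$ digits of $n$, one decomposes $\CC^S$ into the subspace on which the averaging operator $T = \frac{1}{k}\sum_{i \in \Sigma_k} \delta(\cdot,i)$ has eigenvalues of modulus one and its orthogonal complement on which $T$ is a strict contraction. The modulus-one eigenspaces, by primitivity, consist of roots of unity arising from cyclic behaviour modulo some $q$ coprime to $k$; they contribute a periodic factor $a_{\mathrm{per}}$ together with corrections that can be absorbed into $a_{\mathrm{fs}}$ and $a_{\mathrm{bs}}$, producing the structured summand $a_{\mathrm{str}}(n) = F(a_{\mathrm{per}}(n), a_{\mathrm{fs}}(n), a_{\mathrm{bs}}(n))$ of item (ii). Setting $a_{\mathrm{uni}} = a - a_{\mathrm{str}}$, only the contracting part of the spectrum remains.

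For the Gowers uniformity of $a_{\mathrm{uni}}$, I would argue by induction on $s$ in the spirit of Mauduit--Rivat and its higher-degree extensions. Expanding $\norm{a_{\mathrm{uni}}}_{U^{s+1}[N]}^{2^{s+1}}$ as an average of $2^{s+1}$-fold products under the shifts $n_0 + \omega_1 n_1 + \dots + \omega_{s+1} n_{s+1}$, one partitions the base-$k$ expansion into $\Theta(\log N)$ scales of digit blocks. At each scale, the strict spectral contraction of $T$ on the non-trivial subspace combined with a van der Corput-type inequality extracts a small power gain, provided the shifts do not align pathologically with digit boundaries. Accumulating the gains across scales and controlling the carries that mix neighbouring blocks yields $\norm{a_{\mathrm{uni}}}_{U^{s+1}[N]} = O(N^{-c(s)})$ for some $c(s) > 0$.

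The main obstacle is this last step. One must quantify uniformly in $s$ how frequently the Gowers shifts disrupt the digit structure and bound the cumulative effect of carries interacting with the multilinear $2^{s+1}$-fold products. Balancing the spectral contraction gain against the loss from carry propagation requires a delicate inductive scheme together with quantitative equidistribution estimates for the internal states of $\cA$ along arithmetic progressions, and in practice this constitutes the bulk of the work in \cite{ByszewskiKoniecznyMullner}. A secondary subtlety is ensuring that the ``corrections'' absorbed into $a_{\mathrm{fs}}$ and $a_{\mathrm{bs}}$ at the reduction step are genuinely forwards/backwards synchronising automatic sequences over the base $K$; this is verified by exhibiting explicit synchronising words constructed from the transient and cyclic structure of $\cA$.
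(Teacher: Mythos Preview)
The paper does not prove this theorem. Theorem~\ref{thm:BKM} is stated as the main result of \cite{ByszewskiKoniecznyMullner} and is used as a black box; there is no argument for it in the present paper to compare your proposal against.

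That said, your proposal is not a proof but an outline, and you acknowledge as much when you write that the carry-control step ``in practice \dots\ constitutes the bulk of the work in \cite{ByszewskiKoniecznyMullner}''. The broad shape you describe---pass to a power of $k$ to make final components primitive, peel off most- and least-significant-digit dependence into backwards/forwards synchronising factors, separate a periodic contribution tied to the height, and then show the residual is Gowers uniform via a digit-block/van der Corput scheme---is indeed close in spirit to the approach of \cite{ByszewskiKoniecznyMullner}. But several of the steps you present as routine are not: the claim that the modulus-one eigenspace of the averaging operator $T$ accounts exactly for the periodic part together with corrections absorbable into $a_{\mathrm{fs}}$ and $a_{\mathrm{bs}}$ requires a genuine structural analysis of the automaton, not just linear algebra on $\CC^S$; and the Gowers-norm bound is not obtained by a direct induction on $s$ of the Mauduit--Rivat type, but rather through a recursive description of automatic sequences that is specific to that paper. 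As a sketch pointing the reader to \cite{ByszewskiKoniecznyMullner} your text is serviceable; as a self-contained proof it has essential gaps at exactly the places you flag.
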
 

In general, the structured part $a_{\mathrm{str}}$ of an automatic sequence can be somewhat complicated. However, in \cite{ByszewskiKoniecznyMullner} we showed that $a_{\mathrm{str}} = 0$ almost everywhere (i.e., $\#\set{n < N}{a_{\mathrm{str}}(n) \neq 0}/N \to 0$ as $N \to \infty$) if and only if for all integers  $A > B \geq 0$ we have
\[
	\frac{1}{N} \sum_{n=0}^{N-1} a(An+B) \to 0 \text{ as } N \to \infty.
\]

The following lemma elucidates the connection between the concept of effective alphabet size from Definition \ref{def:eff-alph} and the structured part of an automatic sequence in Theorem \ref{thm:BKM}. Recall that the family $\mathcal{AP}_k$ consists of sets of the form \eqref{eq:def:AP_k}.

\begin{lemma}\label{lem:FCAE-positive}
	Let $A \subset \NN_0$ be a $k$-automatic set and let $R \in \mathcal{AP}_k$. Then the following conditions are equivalent.
	\begin{enumerate}
	\item\label{it:18:A} There exists $P \in \mathcal{AP}_k$ with $P \subset R$ such that for each $Q \in \mathcal{AP}_k$ with $Q \subset P$ we have $A \cap Q \neq \emptyset$.
	\item\label{it:18:B} There exists $P \in \mathcal{AP}_k$ with $P \subset R$ such that for each $Q \in \mathcal{AP}_k$ with $Q \subset P$ we have $d_{\log}(A \cap Q) > 0$.
	\item\label{it:18:D} We have $d_{\log}(A \cap R) > 0$.
	\item\label{it:18:C} There exists $P \in \mathcal{AP}_k$ with $P \subset R$ such that $1_{A,\mathrm{str}}$ is constant and strictly positive on $P$.
	\end{enumerate}
\end{lemma}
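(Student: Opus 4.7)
The strategy is to apply the decomposition $1_A = 1_{A,\mathrm{str}} + 1_{A,\mathrm{uni}}$ from Theorem~\ref{thm:BKM} to reduce all four conditions to an assertion about the values taken by $1_{A,\mathrm{str}}$ on a suitable fine refinement of $R$. After replacing $k$ by a sufficiently large power $K$, the structured part $1_{A,\mathrm{str}} = F(a_{\mathrm{per}}, a_{\mathrm{fs}}, a_{\mathrm{bs}})$ is pointwise constant on every $Q \in \mathcal{AP}_K$ whose initial word $u$ ends with the backward-synchronising word of $a_{\mathrm{bs}}$, whose final word $v$ begins with the forward-synchronising word of $a_{\mathrm{fs}}$, and whose residue modulus is a multiple of the period of $a_{\mathrm{per}}$. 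Writing $R = Q_1 \sqcup \cdots \sqcup Q_N$ as a finite disjoint union of such $Q_i$'s with $1_{A,\mathrm{str}}|_{Q_i} \equiv c_i$, Gowers uniformity of $1_{A,\mathrm{uni}}$ combined with the automaticity of each $1_{Q_i}$ yields the basic identity
\[
d_{\log}(A \cap Q') = c_i \cdot d_{\log}(Q') \qquad \text{for every } Q' \in \mathcal{AP}_k \text{ with } Q' \subset Q_i.
\]
In particular $c_i \in [0,1]$, since $1_A \in \{0,1\}$.

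From this identity the easy implications follow at once. For (iv)~$\Rightarrow$~(ii), the $P$ from (iv) on which $1_{A,\mathrm{str}} \equiv c > 0$ satisfies $d_{\log}(A \cap Q') = c \cdot d_{\log}(Q') > 0$ for every $Q' \subset P$ in $\mathcal{AP}_k$. For (iii)~$\Rightarrow$~(iv), the identity gives $d_{\log}(A \cap R) = \sum_i c_i \cdot d_{\log}(Q_i)$, which together with $c_i \geq 0$ forces some $c_{i_0} > 0$, and then $P := Q_{i_0}$ works. Finally, (ii)~$\Rightarrow$~(i) is just that positive density implies nonempty intersection, while (ii)~$\Rightarrow$~(iii) follows from taking $Q = P$ and using $P \subset R$.

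The substantive implication is (i)~$\Rightarrow$~(iv), which I would argue by contraposition. If (iv) fails, then no $P \subset R$ in $\mathcal{AP}_k$ carries $1_{A,\mathrm{str}}$ as a positive constant, so $c_i = 0$ for every $Q_i$ in the partition and consequently $B := A \cap R$ is $k$-automatic with $d_{\log}(B) = 0$. To contradict (i) I need to exhibit, for every candidate $P \subset R$ in $\mathcal{AP}_k$, some $Q \subset P$ in $\mathcal{AP}_k$ with $A \cap Q = \emptyset$. This reduces to the following structural fact: a $k$-automatic set $B$ with $d_{\log}(B) = 0$ admits, inside every $P \in \mathcal{AP}_k$, a subset $Q \in \mathcal{AP}_k$ disjoint from $B$. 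I would prove this by analysing the strongly connected component structure of an automaton $\cA$ for $B$: letting $s_1$ be the state reached from the initial state after reading the initial word $u_P$, the zero-density hypothesis (after arranging aperiodicity by replacing $K$ with a higher power) forces the existence of a word $w_0$ such that $\delta(s_1, w_0)$ lies in a \emph{rejecting trap} --- an absorbing set of states from which no accepting state is reachable. Extending the initial-word prescription of $P$ by $w_0$ and choosing the remaining parameters of $Q$ (final word, length residue, numerical residue) to refine those of $P$ produces the desired $Q \subset P$ with $A \cap Q = \emptyset$. The main obstacle is this structural fact, whose proof is automaton-theoretic and goes beyond the Gowers-uniform framework used in the other implications.
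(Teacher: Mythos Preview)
Your argument is largely sound and, for (iv)~$\Rightarrow$~(ii), (iii)~$\Rightarrow$~(iv), and the trivial implications, essentially coincides with the paper's. Two points deserve comment.

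First, a small but genuine gap: the exact finite partition $R = Q_1 \sqcup \cdots \sqcup Q_N$ into cells of $\mathcal{AP}_K$ on which $1_{A,\mathrm{str}}$ is constant does not exist in general. A forward synchronising sequence is constant on $\{n : (n)_k \text{ ends in } v\}$ only when $v$ contains the synchronising word; integers whose expansion avoids that word in its last $|v|$ digits are not covered by your cells, and no finite suffix length eliminates them (e.g.\ in base $2$ with synchronising word $11$, the Fibbinary numbers are never captured). The paper fixes this by working with a partition $R = P_1 \cup \cdots \cup P_N \cup E$ where $d_{\log}(E) < \e$, and then letting $\e \to 0$. With that adjustment your density identity and all arguments depending on it go through unchanged.

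Second, for the implication out of (i) the paper takes a shorter and cleaner route. It proves (i)~$\Rightarrow$~(ii) directly: given the $P$ from (i), if some $Q \subset P$ in $\mathcal{AP}_k$ had $d_{\log}(A \cap Q) = 0$, then by the standard fact that a zero-density automatic set omits some fixed word $w$ from all its base-$k$ expansions (\cite[Lem.~3.1]{ByszewskiKoniecznyMullner}), one passes to $Q' \subset Q$ forcing $w$ to occur and obtains $A \cap Q' = \emptyset$, contradicting (i). This argument never touches the structured/uniform decomposition. Your contrapositive (i)~$\Rightarrow$~(iv) first invokes the decomposition to reach $d_{\log}(A \cap R) = 0$ and only then appeals to your ``rejecting trap'' fact --- but that fact is precisely the missing-word lemma in automaton-theoretic clothing (if $w$ is the omitted word, then $\delta(s,w)$ lies in a rejecting trap for every reachable $s$). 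So the combinatorial core is identical, while your detour through (iv) and the Gowers-norm partition is unnecessary for this particular implication.
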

\begin{proof}
	Replacing $A$ with $A \cap R$, we may freely assume that $R = \NN_0$. We will prove implications \ref{it:18:A} $\Rightarrow$ \ref{it:18:B},  \ref{it:18:D} $\Rightarrow$ \ref{it:18:C} $\Rightarrow$ \ref{it:18:B}. Since the implications \ref{it:18:B} $\Rightarrow$ \ref{it:18:A} and \ref{it:18:B} $\Rightarrow$ \ref{it:18:D} are immediate, this will finish the argument.

	\ref{it:18:A} $\Rightarrow$ \ref{it:18:B}: Let $P$ be as in \ref{it:18:A}. For the sake of contradiction, suppose that for some $Q \in \mathcal{AP}_k$ we have $d_{\log}(A \cap Q) = 0$. Since the set $A \cap Q$ is automatic, it follows that there exists a word $w \in \Sigma_k^*$ which does not appear in the base-$k$ expansion of any $n \in A \cap Q$ (see e.g.\ \cite[Lem.\ 3.1]{ByszewskiKoniecznyMullner}). However, there exists $Q' \in \mathcal{AP}_k$ with $Q' \subset Q$ such that $w$ appears in the base-$k$ expansion of all $n \in Q'$. It follows that $A \cap Q' = \emptyset$, contradicting \ref{it:18:A}.
	
	\ref{it:18:D} $\Rightarrow$ \ref{it:18:C}: For each $\e > 0$ there exists a partition
	\[ 
	\NN_0 = P_1 \cup P_2 \cup \dots \cup P_N \cup E 
	\]
	where for each $P_i \in \mathcal{AP}_k$ and $1_{A,\mathrm{str}}$ is constant on $P_i$ for $1 \leq i < N$ and $d_{\log}(E) < \e$. Picking $\e = d_{\log}(A)$, we see that there exists some cell $P = P_i$ in the partition above such that $d_{\log}(A \cap P) > 0$ and $1_{A,\mathrm{str}}$ is constant on $P$ and takes some value $\a \in [0,1]$. It remains to show that $\a > 0$. Because $1_P$ is strongly structured and hence asymptotically orthogonal to all Gowers uniform functions (cf.\ \cite[Prop.\ 2.5]{ByszewskiKoniecznyMullner}), we have 
	\begin{align*}
	0 < d_{\log}(A \cap P)  &= \lim_{N \to \infty} \frac{1}{\log N}\sum_{n=0}^{N-1} \frac{1_P(n) 1_{A}(n)}{n+1}
	\\ &= \lim_{N \to \infty} \frac{1}{\log N}\sum_{n=0}^{N-1} \frac{\a 1_P(n)}{n+1} + \frac{1_P(n) 1_{A,\mathrm{uni}}(n)}{n+1} = \a d_{\log}(P).
	\end{align*}
	In particular, $\a =  d_{\log}(A \cap P)/d_{\log}(P) > 0$.
	
	\ref{it:18:C} $\Rightarrow$ \ref{it:18:B}: Let $P$ be as in \ref{it:18:C}, and let $\a > 0$ be the value that $1_{A,\mathrm{str}}$ takes on $P$. For $Q \in \mathcal{AP}_k$ with $Q \subset P$ we have, by the same computation as above, $d_{\log}(A \cap Q) = \a d_{\log}(Q) > 0$.
\end{proof}

We now have all the ingredients necessary to see that Corollary \ref{cor:main-A} follows from Theorem \ref{thm:main-B}.

\begin{proof}[Proof of Corollary \ref{cor:main-A}]
	It follows from the criterion for vanishing of the structured part, mentioned earlier in this section, that $a_{\mathrm{str}}$ is almost everywhere constant and takes a value strictly between $0$ and $1$. Hence, bearing in mind that $1_{a^{-1}(1)} = a$ and $1_{a^{-1}(0)} = 1-a$, we conclude from Lemma \ref{lem:FCAE-positive} that $r(a) = 2$, as needed.
\end{proof}

\section{Lower bound}\label{sec:suff}

In this section, we prove the lower bound in Theorem \ref{thm:main-B}, that is, $\parith_a(\ell) \geq r^\ell$. This is a standard application of the tools of higher order Fourier analysis. A key ingredient in the argument is the following variant of the generalised von Neumann theorem, see e.g.\ \cite[Ex.\ 1.3.23]{Tao-book} or \cite[Prop.\ 2.1]{ByszewskiKoniecznyMullner}. Below, we call a map $f \colon X \to \CC$ \emph{1-bounded} if $\norm{f}_\infty := \sup_{x \in X} \abs{f(x)} \leq 1$.

\begin{proposition}\label{prop:suff:vNeumann}
	Fix $\ell \geq 1$. Let $N \geq 1$ and let $f_0,f_1,\dots,f_{\ell-1} \colon [N] \to \CC$ be $1$-bounded maps. Then
	\[
		\abs{ \sum_{n,m = 0}^{N-1} \prod_{i=0}^{\ell-1} f_i(n+im) }
		\ll N^2 \min_{0 \leq i < \ell} \norm{f_i}_{U^{\ell-1}[N]}.
	\]
\end{proposition}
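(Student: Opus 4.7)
The plan is to prove this as a standard iterated Cauchy--Schwarz argument; indeed the statement is essentially \cite[Ex.~1.3.23]{Tao-book} or \cite[Prop.~2.1]{ByszewskiKoniecznyMullner}, and one approach is simply to reduce to one of these references by a change of variables. For a self-contained argument, I proceed as follows.

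First, by the symmetry of the expression it suffices to prove the bound with the minimum replaced by $\|f_j\|_{U^{\ell-1}[N]}$ for one specific index $j$; any other index can be brought to position $j$ by a linear substitution on $(n,m)$ (e.g.\ $n \mapsto n - jm$ to move $f_j$ to position $0$, together with a relabeling of the remaining functions to absorb boundary effects into the implicit constant). I will take $j = 0$ for concreteness. To handle the fact that $n + im$ may leave $[N]$, I extend each $f_i$ by zero to $\mathbb{Z}$; since each summand is supported in a box of size $O(N)^2$, this only affects the implicit constant.

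The core of the proof is to apply Cauchy--Schwarz $\ell-1$ times, at each step dualising away one of $f_1, f_2, \ldots, f_{\ell-1}$ and introducing a new shift variable $h_t$. At the first step, pulling $f_0(n)$ out, squaring and using $\|f_0\|_\infty \leq 1$ gives
\[
    |S|^2 \leq N \sum_n \Bigl| \sum_m \prod_{i=1}^{\ell-1} f_i(n+im) \Bigr|^2
    = N \sum_{n, m, h_1} \prod_{i=1}^{\ell-1} f_i(n+im)\, \overline{f_i(n+im + i h_1)}.
\]
After a shift $n \mapsto n - m$ this eliminates $f_1$ from the product in a form analogous to the original, with $\ell - 2$ remaining functions and one extra parameter $h_1$ that plays the role of a difference. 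Iterating, after $\ell-1$ applications of Cauchy--Schwarz we obtain a bound of the form
\[
    |S|^{2^{\ell-1}} \leq N^{C_\ell} \sum_{\mathbf{h}, n, m}
    \prod_{\omega \in \{0,1\}^{\ell-1}} \mathcal{C}^{|\omega|}
    f_0\bigl( n + \varphi_\omega(m, \mathbf{h}) \bigr),
\]
where $\varphi_\omega$ is an explicit linear form in $(m, h_1, \ldots, h_{\ell-1})$ with integer coefficients depending on $\omega$. A final linear change of variables converts this cube into the standard Gowers cube, so that the right-hand side becomes $N^{C'_\ell} \|f_0\|_{U^{\ell-1}[N]}^{2^{\ell-1}}$, and extracting the $2^{\ell-1}$-th root yields the claim.

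The main technical obstacle is the last step: the linear forms $\varphi_\omega$ arising naturally from iterated Cauchy--Schwarz are not exactly those appearing in the definition of $\|\cdot\|_{U^{\ell-1}[N]}$, so one must verify that the relevant change of variables on $(m, h_1, \ldots, h_{\ell-1})$ is invertible over $\mathbb{Z}$ and that the resulting boundary losses are absorbed in the factor $N^2$. Invertibility follows because the change-of-variables matrix is triangular with nonzero diagonal entries equal to $\pm 1$ (coming from the successive substitutions that eliminate $f_1, \ldots, f_{\ell-1}$), so no divisibility issues arise; the boundary losses are absorbed using 1-boundedness and the fact that all variables range in intervals of size $O(N)$.
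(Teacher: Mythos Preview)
The paper does not give its own proof of this proposition; it is stated with references to \cite[Ex.~1.3.23]{Tao-book} and \cite[Prop.~2.1]{ByszewskiKoniecznyMullner} and used as a black box. Your sketch follows the standard iterated Cauchy--Schwarz argument underlying those references, so in spirit it matches exactly what the paper invokes.

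There is, however, an internal inconsistency in your write-up that you should fix. You announce that you will bound by $\|f_0\|_{U^{\ell-1}[N]}$ and ``dualise away'' $f_1,\dots,f_{\ell-1}$, but your very first displayed step uses $\|f_0\|_\infty\leq 1$ to discard $f_0$, not one of the others. After that step $f_0$ no longer appears anywhere, so the iteration as written terminates with a Gowers-type average of $f_{\ell-1}$, giving a bound in terms of $\|f_{\ell-1}\|_{U^{\ell-1}[N]}$ rather than $\|f_0\|_{U^{\ell-1}[N]}$. This is harmless for the proposition (by the symmetry you already noted, controlling any single $\|f_j\|$ suffices), but you should either (a) declare $j=\ell-1$ instead of $j=0$, or (b) in the first step change variables to $p=n+(\ell-1)m$ so that $f_{\ell-1}(p)$ is the function you pull out, and then genuinely retain $f_0$ throughout. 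Relatedly, the shift $n\mapsto n-m$ after the first step does not ``eliminate'' $f_1$; it only makes the $f_1$-factor independent of $m$, which prepares it to be pulled out at the \emph{next} application of Cauchy--Schwarz. With these corrections the sketch is the standard one and is fine.
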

(Above, the constant implicit in the $\ll$ notation is allowed to depend on $\ell$.) As an immediate corollary, we have the following counting lemma.

\begin{lemma}\label{lem:suff:counting}
	Fix $\ell \geq 1$. Let $N \geq 1$, $\e > 0$, let $f_0,f_1,\dots,f_{\ell-1} \colon [N] \to \CC$ be $1$-bounded and assume that for each $0 \leq i < \ell$ we have a decomposition $f_i = f_{i,\mathrm{str}} + f_{i,\mathrm{uni}}$ where $f_{i,\mathrm{str}} \colon [N] \to \CC$ are $1$-bounded and $\norm{f_{i,\mathrm{uni}}}_{U^{\ell-1}[N]} \leq \e$. Then
	\[
		 \sum_{n,m = 0}^{N-1} \prod_{i=0}^{\ell-1} f_i(n+im)
		= \sum_{n,m = 0}^{N-1} \prod_{i=0}^{\ell-1} f_{i,\mathrm{str}}(n+im) + O(\e N^2).
	\]
\end{lemma}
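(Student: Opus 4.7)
The plan is to reduce Lemma \ref{lem:suff:counting} to Proposition \ref{prop:suff:vNeumann} by multilinear expansion. Concretely, I would write
\[
    \prod_{i=0}^{\ell-1} f_i(n+im) = \sum_{S \subset \{0,1,\dots,\ell-1\}} \prod_{i \in S} f_{i,\mathrm{uni}}(n+im) \prod_{i \notin S} f_{i,\mathrm{str}}(n+im),
\]
and split the resulting $2^\ell$ contributions to $\sum_{n,m} \prod_i f_i(n+im)$ into the main term $S = \emptyset$ plus an error coming from the $2^\ell - 1$ subsets $S \neq \emptyset$.

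For each such $S$, pick any index $j \in S$. Set $g_i = f_{i,\mathrm{uni}}$ if $i \in S$ and $g_i = f_{i,\mathrm{str}}$ otherwise. Each $g_i$ is $O(1)$-bounded (the $f_{i,\mathrm{str}}$ by hypothesis, the $f_{i,\mathrm{uni}} = f_i - f_{i,\mathrm{str}}$ are $2$-bounded), and after absorbing this constant into the implicit constant, Proposition \ref{prop:suff:vNeumann} applied to the tuple $(g_0,\dots,g_{\ell-1})$ yields
\[
    \abs{ \sum_{n,m=0}^{N-1} \prod_{i=0}^{\ell-1} g_i(n+im) } \ll N^2 \norm{g_j}_{U^{\ell-1}[N]} = N^2 \norm{f_{j,\mathrm{uni}}}_{U^{\ell-1}[N]} \leq \e N^2.
\]
Summing the $2^\ell - 1 = O_\ell(1)$ non-empty subsets produces a cumulative error of $O(\e N^2)$, with the implicit constant depending only on $\ell$, which matches the statement.

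There is no real obstacle here: the only mild point is that the ``uniform'' pieces $f_{i,\mathrm{uni}}$ are a priori only $2$-bounded rather than $1$-bounded, but this is a harmless constant in Proposition \ref{prop:suff:vNeumann} (its conclusion is multilinear in the inputs and so scales by that constant factor). Everything else is just expansion and applying the hypothesis that the minimum Gowers norm over the factors is at most $\e$ whenever at least one uniform factor is present.
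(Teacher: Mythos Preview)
Your argument is correct and is exactly the standard deduction the paper has in mind: the paper gives no separate proof, stating only that the lemma is an ``immediate corollary'' of Proposition~\ref{prop:suff:vNeumann}. (A minor variant is to telescope $\prod_i f_i - \prod_i f_{i,\mathrm{str}}$ into $\ell$ terms rather than $2^\ell-1$, but this makes no difference since $\ell$ is fixed.)
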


We are now ready to approach the proof of the lower bound. Let $a$ be a $k$-automatic sequence with effective alphabet size $r$, and fix $\ell \in \NN$. Pick $P \in \mathcal{AP}_k$ such that $a$ takes at least $r$ values on each $Q \in \mathcal{AP}_k$, $Q \subset P$. Note that each $P' \in \mathcal{AP}_k$ with $P' \subset P$ also enjoys the property mentioned above. Replacing $P$ with a some $P' \in \mathcal{AP}_k$ with $P' \subset P$, we can  assume that $a$ takes on $P$ exactly $r$ different values $\omega_1,\omega_2,\dots,\omega_r$. By Lemma \ref{lem:FCAE-positive} we can construct a sequence $P_1,P_2,\dots,P_r \in \mathcal{AP}_k$ with $P \supset P_1 \supset P_2 \supset \dots \supset P_r$ such that for each $1 \leq i \leq r$, the sequence $1_{a(\omega_i)^{-1},\mathrm{str}}$ is constant on $P_i$ and takes some strictly positive value $\a_i$. 

Put $Q := P_r$, $\Theta := \{ \omega_1,\omega_2,\dots,\omega_r\}$, $\delta := \displaystyle \min_{1 \leq i \leq r} \a_i^\ell$ and let $N$ be a large integer. For $x = (x(i))_{i=0}^{\ell-1} \in \Theta^\ell$, consider the set
\begin{align*}
	S(x,N) &:= 
	\set{ (n,m) \in \NN_0^2 }
	{
		\begin{array}{l}
		n+im < N,\ n+im \in Q, \text{ and}\\
		a(n+im) = x(i) 
		\text{ for all } 0 \leq i < \ell
		\end{array}
	}.
\end{align*}
We will show that $\# S(x,N) \gg N^2$, which for sufficiently large $N$ implies that $\# S(x,N) > N$ and consequently $x$ appears in $a$ along an arithmetic progression. It will follow that $\parith_{a}(\ell) \geq \# \Theta^\ell = r^\ell$, as needed. This estimate also yields Corollary \ref{cor:main-B}.

If follows from Theorem \ref{thm:BKM} combined with e.g.\ \cite[Prop.\ 2.5]{ByszewskiKoniecznyMullner} that there is a positive constant $c$ such that $\norm{1_Q 1_{a^{-1}(\omega),\mathrm{uni}} }_{U^{\ell-1}[N]} \ll N^{-c}$  for all $\omega \in \Omega$.  
It follows from Lemma \ref{lem:suff:counting} that 
\begin{align*}
	\# S(x,N)
	 &=  \sum_{n,m = 0}^{N-1} \prod_{i=0}^{\ell-1} \bra{  1_{Q \cap [N] } 1_{a^{-1}(x(i))} } (n+im) 
	\\ &
	= \sum_{n,m = 0}^{N-1} \prod_{i=0}^{\ell-1} 1_{Q \cap [N]}(n+im) 1_{a^{-1}(x(i)),\mathrm{str}}(n+im) + O(N^{2-c} )
	\\&
	\geq 
	\delta \cdot \# \set{ (n,m) \in \NN_0^2 }
	{
		n+im \in Q \cap [N] \text{ for all } 0 \leq i < \ell
	} 
	 + O(N^{2-c} )
	\\&\gg N^2.
\end{align*}
Thus, the argument is complete.

\section{Upper bound}

In this section, we prove the upper bound in Theorem \ref{thm:main-B}. It will be convenient to first consider the special case where the sequence is primitive. 
A key idea behind our argument is to construct an alterative description of the effective alphabet size $r(a)$, which is stated as Proposition \ref{prop:alph:char-of-r}.

\subsection{Primitive case}

Let $a$ be a primitive automatic sequence produced by an automaton $\cA = (S,s_0,\Sigma_k,\delta,\Omega,\tau)$ which ignores the leading zeros (i.e., $\delta(s_0,0) = s_0$). 

We will need the notion of the height of a substitution (taken from~\cite{Queffelec2010}):
Let us consider a primitive substitution $\eta: \Lambda \to \Lambda ^k$ with a fixed point $u \in \Lambda^\infty$ (i.e. $\eta(u) = u$).
The height measures in some sense how far $u$ is from being a periodic sequence.
For every $n \geq 0$ we put
\begin{align*}
	R_n &= \{d \geq 1: u(n+d) = u(n)\} &\text{and} &&g_n &= \gcd R_n.
\end{align*}

\begin{definition}\label{def:alpha:height}
The height of $\eta$, denoted by $h = h(\eta)$, is the number
	\begin{align*}
		h = \max\set{m\geq 1 }{ \gcd(m,k) = 1,\ m \mid g_0}.
	\end{align*}
\end{definition}

We list some standard properties of the height, which can be found in~\cite{Queffelec2010}.
\begin{proposition}
	\begin{enumerate}
		\item For each  $n \geq 0$ we have
		\[h = \max\set{m\geq 1}{ \gcd(m,k) = 1,\ m \mid g_n}.\]
		\item If $h = \#\Lambda$ then $u$ is periodic.
		\item For each $0 \leq j < h$ we consider the class
		\begin{align*}
			C_j = \{u(n): n\equiv j \bmod h\}.
		\end{align*}
		These classes form a partition of $\Lambda$.
		If we identify in $u$ the letters in the same class $C_j$, we thus obtain a periodic sequence, and $h$ is the largest positive integer coprime to $k$ with this property.
	\end{enumerate}
\end{proposition}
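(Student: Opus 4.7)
The plan is to establish part (1) first, as it is the main structural statement, and then derive parts (2) and (3) as easy consequences. A preliminary observation is that $g_n$ depends only on the letter $u(n)$: if $u(n) = u(n')$ then the sets $\{d \geq 1 : u(n+d) = u(n)\}$ and $\{d \geq 1 : u(n'+d) = u(n')\}$ share the same gcd, since each is a translate of the set of positive differences between occurrences of one fixed letter. Part (1) therefore amounts to showing that the set $\{m \geq 1 : \gcd(m,k)=1,\ m \mid g_n\}$ is the same for every letter of $u$, not just for $u(0)$.

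To prove part (1), fix $m$ coprime to $k$ with $m \mid g_0$ and aim to show $m \mid g_n$ for every $n$; equivalently, every letter $c \in \Lambda$ appears in a single residue class modulo $m$. Suppose for contradiction that some letter $c$ appears at positions $n_1, n_2$ with $n_1 \not\equiv n_2 \bmod m$. By primitivity of $\eta$, there exist $N$ and $0 \leq i < k^N$ with $\eta^N(c)_i = u(0)$. Iterating the fixed-point relation $u(kn + j) = \eta(u(n))_j$ yields the identity $\eta^N(u(n)) = u(k^N n)\, u(k^N n + 1) \cdots u(k^N n + k^N - 1)$, so evaluating at $n = n_1$ and $n = n_2$ gives $u(k^N n_1 + i) = u(k^N n_2 + i) = u(0)$. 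The hypothesis $m \mid g_0$ now forces both $k^N n_1 + i$ and $k^N n_2 + i$ to be $\equiv 0 \bmod m$, whence $k^N (n_1 - n_2) \equiv 0 \bmod m$; since $\gcd(k, m) = 1$ we conclude $n_1 \equiv n_2 \bmod m$, a contradiction. The symmetric argument (swapping the roles of $0$ and $n$) shows conversely that $m \mid g_n$ implies $m \mid g_0$, completing part (1).

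Parts (3) and (2) then follow quickly. By part (1), every occurrence of a given letter $c$ lies in a common residue class modulo $h$, so the sets $C_j = \{u(n): n \equiv j \bmod h\}$ are pairwise disjoint; primitivity of $\eta$ ensures they cover $\Lambda$, so they partition it. The quotient sequence $n \mapsto C_{n \bmod h}$ is manifestly periodic of period dividing $h$, and maximality of $h$ among coprime-to-$k$ integers admitting such a periodic identification holds because any larger $m$ with the same property would confine all occurrences of $u(0)$ to one residue class modulo $m$, forcing $m \mid g_0$ and contradicting the definition of $h$. Part (2) is then immediate: if $h = \#\Lambda$, the $h$ partition classes are singletons, so $u(n)$ is determined by $n \bmod h$ and $u$ is periodic. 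The main obstacle in this entire argument is the contradiction step in the middle paragraph; it relies crucially on both the primitivity of $\eta$ (so that $u(0)$ eventually appears in $\eta^N(c)$ for every letter $c$) and on $\gcd(k, m) = 1$ (so that $k^N$ is invertible modulo $m$), and everything else is essentially bookkeeping.
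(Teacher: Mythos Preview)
The paper does not actually prove this proposition; it simply records these as standard properties of the height and refers to Queff\'elec's monograph~\cite{Queffelec2010}. Your argument is correct and is essentially the classical one found there.

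Two minor points are worth flagging. First, the justification of your preliminary observation (``each is a translate of the set of positive differences'') is loose: the sets $R_n$ and $R_{n'}$ are not translates of one another. The right reason $g_n$ depends only on the letter $u(n)$ is uniform recurrence of $u$: each gap between consecutive occurrences of a fixed letter recurs infinitely often (since the corresponding factor of $u$ does), so the gcd of forward return times does not depend on which occurrence you start from. Second, in the ``symmetric argument'' for the converse of part~(1), the hypothesis $m \mid g_n$ only controls occurrences of $u(n)$ at positions $\geq n$, so you need $k^N n_1 + i$ and $k^N n_2 + i$ both to exceed $n$; this is automatic when $n_1, n_2 \geq 1$ by taking $N$ large, but the case $c = u(0)$ with $n_1 = 0$ needs a separate word. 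The clean fix is precisely your preliminary observation: replace $n$ by the first occurrence $n^*$ of the letter $u(n)$, so that $g_n = g_{n^*}$ and the forward argument runs verbatim with $n^*$ in place of $0$. With this small patch the proof is complete; the substance is sound.
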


Let $t(n) = \delta(s_0,(n)_k)$ denote the state reached by $\cA$ upon reading $n$ as input. Note that the sequence $t = (t(n))_{n=0}^\infty$ is produced by the same automaton as $a$, with the output function replaced by the identity map. The sequence $t$ is also the fixed point of the substitution $S \to S$ given by $s \mapsto (\delta(s,0),\delta(s,1),\dots,\delta(s,k-1))$. With the sets $C_j$ defined as above, we let $j(n)$ denote the unique index such that $t(n) \in C_{j(n)}$. We point out that the sequence $j = (j(n))_{n=0}^\infty$ is periodic (in fact, $j(n) = n \bmod h$). Replacing $k$ (and hence also $\eta$) with a suitable power, we may freely assume that $k \equiv 1 \bmod h$.

We will need the following technical lemma. A similar argument can be found in \cite{Mullner-2017}.

\begin{lemma}\label{lem:alph:path-exists}
	Let $q,m > 0$ be integers, with $q$ coprime to $k$. Then for each $n \in h \ZZ$ and $\ell \in \NN$ there exists $u \in \Sigma_k^*$ such that $\delta(s_0,u) = s_0$, $\abs{u} \equiv \ell \bmod m$ and $[u]_k \equiv n \bmod q$.
\end{lemma}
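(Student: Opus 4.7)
The plan is to decouple the three constraints on $u$ by noticing that the length constraint is essentially free. Since $\delta(s_0, 0) = s_0$, prepending zeros to any loop at $s_0$ yields another loop at $s_0$ with unchanged base-$k$ value but arbitrarily shifted length. Thus if I can exhibit any single loop $u_0$ at $s_0$ with $[u_0]_k \equiv n \pmod{q}$, then $u := 0^a u_0$, with $a \geq 0$ chosen so that $a \equiv \ell - |u_0| \pmod{m}$, satisfies $\delta(s_0, u) = s_0$, $[u]_k = [u_0]_k \equiv n \pmod{q}$, and $|u| = a + |u_0| \equiv \ell \pmod{m}$. The problem reduces to the residue-only claim: for every $n \in h\ZZ$ and every $q$ coprime to $k$, some loop $u_0$ at $s_0$ satisfies $[u_0]_k \equiv n \pmod{q}$.

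For this reduced statement I would work in the augmented automaton $\hat\cA$ on state set $S \times \ZZ/q$ with initial state $(s_0, 0)$ and transitions $\hat\delta\bra{(s, r), j} := \bra{\delta(s, j),\ kr + j \bmod q}$. A word $w$ drives $\hat\cA$ from $(s_0, 0)$ to $(\delta(s_0, w), [w]_k \bmod q)$, so the claim becomes a reachability statement: every pair $(s_0, n \bmod q)$ with $n \in h\ZZ$ is reachable in $\hat\cA$. The hypothesis $h \mid n$ is necessary: since $k \equiv 1 \pmod h$ we have $[w]_k \equiv \sum_i w(i) \pmod h$, and combined with $\delta(s_0, w) = t([w]_k) \in C_{[w]_k \bmod h}$ (the equality uses $\delta(s_0, 0) = s_0$ to absorb leading zeros) together with $s_0 \in C_0$, this forces $h \mid [w]_k$ whenever $w$ loops at $s_0$.

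To prove the converse direction I would invoke primitivity of $\cA$ quantitatively. Let $W_L := \#\{w \in \Sigma_k^L : \delta(s_0, w) = s_0\}$ and $W_L(r) := \#\{w \in \Sigma_k^L : \delta(s_0, w) = s_0,\ [w]_k \equiv r \pmod q\}$; by primitivity, $W_L \asymp \lambda^L$ for a dominant eigenvalue $\lambda > 1$. Expanding the indicator $\mathbf{1}_{[w]_k \equiv r \bmod q}$ as a Fourier sum over additive characters of $\ZZ/q$ reduces the task to bounding the twisted sums $\sum_w e^{2\pi i a [w]_k / q} \mathbf{1}_{\delta(s_0, w) = s_0}$ for $a \not\equiv 0 \pmod q$. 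Using that $q$ is coprime to $k$ (so the phase genuinely sees the individual digits of $w$), combined with primitivity of $\cA$ beyond its height-$h$ period, one shows that twists by characters not factoring through the height quotient grow strictly slower than $\lambda^L$, while the remaining twists contribute exactly enough to give $W_L(r) \gg \lambda^L$ for every admissible $r$. In particular $W_L(r) > 0$ for large $L$, producing the required loop $u_0$. The main obstacle is the character-sum estimate for the nontrivial twists; this is the standard primitivity/aperiodicity bound of the kind employed in~\cite{Mullner-2017}.
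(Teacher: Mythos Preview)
Your first reduction is correct and is in fact a genuine simplification over the paper's argument: since $\delta(s_0,0)=s_0$, prepending zeros lets you discard the length constraint entirely, whereas the paper carries the parameter $\ell \bmod m$ through the whole proof. Your reformulation as reachability in the product automaton on $S\times\ZZ/q\ZZ$ is also fine, and your verification that every loop at $s_0$ has value divisible by $h$ is correct.

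The gap is in your third step. You assert that for characters $\chi$ of $\ZZ/q\ZZ$ not factoring through $\ZZ/h\ZZ$ the twisted sums
\[
S_L(\chi)=\sum_{\substack{|w|=L\\ \delta(s_0,w)=s_0}}\chi([w]_k)
\]
grow strictly slower than $W_L$, but you do not prove this; you only point to \cite{Mullner-2017}. Because the phase $\chi([w]_k)$ depends on the \emph{position} of each digit (via the factor $k^{L-1-i}$), $S_L(\chi)$ is not a single matrix power but a product of varying twisted matrices, and extracting a spectral gap here is real work. This is exactly the step that carries the entire content of the lemma, so as written the proposal is a sketch rather than a proof.

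More importantly, spectral estimates are unnecessary. The paper proceeds purely algebraically: setting
\[
W(s,s';\ell)=\{[u]_k\bmod q:\delta(s,u)=s',\ |u|\equiv\ell\bmod m\},
\]
the concatenation rule gives $W(s,s';\ell)k^{\ell'}+W(s',s'';\ell')\subset W(s,s'';\ell+\ell')$, a cardinality comparison (using primitivity) upgrades this to equality, and one concludes that every $W(s,s';\ell)$ is a coset of a fixed subgroup $H\leq\ZZ/q\ZZ$. The digit $0$ forces $W(s_0,s_0;\ell)=H$ for all $\ell$, and then $H=h\ZZ/q\ZZ$ by the definition of height.

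Your own setup admits an equally short algebraic finish, avoiding the character sums altogether. With $G:=\{[u]_k\bmod q:\delta(s_0,u)=s_0\}$ and $G_L$ its fixed-length version, prepending zeros gives $G_L\subset G_{L+1}$, so $G=G_L$ for all large $L$. Concatenation yields $k^{L'}G+G_{L'}\subset G$; choosing $L'$ large with $k^{L'}\equiv 1\bmod q$ (possible since $\gcd(k,q)=1$) gives $G+G\subset G$, so $G$ is a subgroup. Hence $G=\gcd(g_0,q)\,\ZZ/q\ZZ$, and (after the harmless reduction $h\mid q$) this equals $h\ZZ/q\ZZ$. That already delivers the loop $u_0$ you wanted, with no analytic input.
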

\begin{proof}
	We may assume that $h \mid q$. For $s,s' \in S$ and $\ell \in \ZZ/m\ZZ$ consider the set
	\[
		W(s,s';\ell) = \set{ [u]_k \bmod q}{ \delta(s,u) = s', \abs{u} \equiv \ell \bmod m} \subset \ZZ/q\ZZ.
	\]
	Since $\cA$ is primitive, all sets $W(s,s';\ell)$ are non-empty. The composition rule for $\delta$ implies that 
	\begin{equation}\label{eq:alph:70:00}
	W(s,s';\ell)k^{\ell'} + W(s',s'';\ell') \subset W(s,s'';\ell+\ell')
	\end{equation}		
	for all $s,s',s'' \in S$ and $\ell,\ell'  \in \ZZ/m\Z$. Comparing the cardinalities of the sets in \eqref{eq:alph:70:00}, we see that the inclusion is in fact an equality:
	\begin{equation}\label{eq:alph:70:01}
	W(s,s';\ell)k^{\ell'} + W(s',s'';\ell') = W(s,s'';\ell+\ell').
	\end{equation}		
	Setting $s=s'=s''$ and $\ell = \ell' = 0$ in \eqref{eq:alph:70:01}, we see that $W(s,s;0)$ is a subgroup $H = m\ZZ/q\ZZ \subset \ZZ/q\ZZ$, where $m \mid q$ is independent of $s$. In general, $W(s,s';\ell) = H + w(s,s';\ell)$ is a coset of $H$ (here, $w(s,s';\ell) \in \ZZ/q\ZZ$).  Thus, \eqref{eq:alph:70:01} becomes
	\begin{equation}\label{eq:alph:70:02}
	w(s,s';\ell)k^{\ell'} + w(s',s'';\ell') \equiv w(s,s'';\ell+\ell') \bmod m.
	\end{equation}	
	Recall that $\delta(s_0,0) = s_0$ and consequently $0 \in W(s_0,s_0,1)$ and $w(s_0,s_0,1) \equiv 0 \bmod m$. By the same token, $w(s_0,s_0,\ell) \equiv 0 \bmod m$ for all $\ell$. As a consequence,
	\[
		\set{ [u]_k \bmod q}{ \delta(s_0,u) = s_0} = \bigcup_{\ell \in \ZZ/m\ZZ} W(s_0,s_0;\ell) = H.
	\]
	It follows that $H = h\ZZ/q\ZZ$, and the argument is complete.
%
\end{proof}

The second ingredient which we will need comes from \cite{Mullner-2017}. Let $c$ denote the least possible cardinality of the set $\delta(S,w) = \set{\delta(s,w)}{ s \in S}$ for $w \in \Sigma_k^*$, and let $\mathcal{M} = \{M_0,M_1,\dots,M_{p-1}\}$ denote the family of all possible sets of the form $\delta(S,w)$ with cardinality $c$.
For $n \geq 0$, let $i(n)$ denote the unique index such that $\delta(M_0,(n)_k) = M_{i(n)}$. Without loss of generality, we may assume that $s_0 \in M_0$, which implies that $t(n) \in M_{i(n)}$ for all $n$.

Finally, for $0 \leq i < p$ and $0 \leq j < h$ we let $S_{i,j} = M_i \cap C_j$. We point out that for all $n$ we have $t(n) \in S_{i(n),j(n)}$.

\begin{example}
	Let us take $k = 3$ and consider the automaton $\cA$ depicted by the following diagram:
	


\begin{center}
\begin{tikzpicture}[scale=.5,shorten >=1pt,node distance=2cm, on grid, auto] 
   \node[initial, state] (a)   {$\alpha$}; 
   \node[state] (b) [above right=of a] {$\beta$};
   \node[state] (d) [below right=of b] {$\gamma$};
   \node[state] (c) [below =of d] {$\delta$}; 
   \node[state] (e) [below =of a] {$\epsilon$}; 
   
  \tikzstyle{loop}=[min distance=6mm,looseness=4]
    \path[->]     
    (a) edge [out = 90, in = 120, min distance= 6 mm,looseness=4, above] node  {0} (a);
    \path[->]  
    (b) edge [loop above] node  {0} (b);
    \path[->]  
    (c) edge [loop below] node  {0} (c);
    \path[->]  
    (d) edge [loop right] node  {0,2} (d);
    \path[->]  
    (e) edge [loop below] node  {0,2} (e);
    
    \tikzstyle{path}=[min distance=10mm,in=270,out=270, looseness=1]
    \path[->]  
    (a) edge [out = -90, min distance = 10mm, looseness = 1.75, in = 90] node {1} (e);
      \tikzstyle{loop}=[min distance=6mm,looseness=4]
    \path[->]  
    (a) edge [bend left] node {2} (b);
    
    \path[->]  
    (b) edge node {1} (d);
    \path[->]  
    (b) edge [bend left] node [above left] {2} (a);

     \path[->]
     (c) edge node {1} (d);
     \path[->]
     (c) edge node {2} (a);
     
     \path[->]
     (d) edge node {1} (a);
     
     \path[->]
     (e) edge node [below] {1} (c);
    
\end{tikzpicture}
\end{center}	

We compute the corresponding automatic sequence
\begin{align*}
	\alpha, \epsilon, \beta, \epsilon, \delta, \epsilon, \beta, \gamma, \alpha, \epsilon, \delta, \epsilon, \delta, \gamma, \alpha,\ldots
\end{align*}
which shows $R_0 = \{8, 14, \ldots\}$ and $g_0 \mid 2$, which implies $h \mid 2$. Moreover, considering the sets $C_0 = \{\alpha, \beta, \delta\}$ and $C_1 = \{\gamma, \epsilon\}$ one finds $h = 2$. 

Moreover, we have $c = 4$ and $M_0 = \{\alpha,\beta,\gamma,\epsilon\}$ and $M_1 = \{\alpha,\gamma,\delta,\epsilon\}$. 
The sets $S_{i,j}  = M_i \cap C_j$ are given by:
\begin{align*}
	S_{0,0} &= \{\alpha,\beta\}, \qquad S_{0,1} = \{\gamma,\epsilon\},\\
	S_{1,0} &= \{\alpha,\delta\}, \qquad S_{1,1} = \{\gamma,\epsilon\}.
\end{align*}

\end{example}

\begin{proposition}\label{prop:alph:char-of-r}
	With the same notation as above, we have
	\[
		r(a) = \max_{i,j} \#\{\tau(s): s \in S_{i,j}\}.
	\]
\end{proposition}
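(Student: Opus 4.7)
Set $r^\star := \max_{i,j} \#\tau(S_{i,j})$. The plan is to prove the two inequalities $r(a) \leq r^\star$ and $r(a) \geq r^\star$ separately. For the upper bound, I would fix an arbitrary $P \in \mathcal{AP}_k$ and refine it to $Q \in \mathcal{AP}_k$ on which both $i(n)$ and $j(n)$ are constant, say equal to $i$ and $j$; then $t(n) \in S_{i,j}$ for all $n \in Q$, so $a(Q) \subseteq \tau(S_{i,j})$ and $\#a(Q) \leq r^\star$. Imposing $j(n) = j$ is the residue condition $n \equiv j \bmod h$, which lies in $\mathcal{AP}_k$ since $\gcd(h,k) = 1$. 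Imposing $i(n) = i$ uses the structural description of the $M$-automaton from~\cite{Mullner-2017}: after possibly replacing $k$ with a further power, the induced transitions on $\mathcal{M}$ take $M_{i'}$ to $M_{(i'+1) \bmod p}$ uniformly in the digit read, so $i(n)$ is determined by $|(n)_k|$ modulo $p$, which is admissible in $\mathcal{AP}_k$.

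For the lower bound, pick $(i^\star, j^\star)$ realizing the maximum and let $\Theta := \tau(S_{i^\star, j^\star})$. The plan is to construct $P^\star \in \mathcal{AP}_k$ satisfying (i) $t(n) \in S_{i^\star, j^\star}$ for every $n \in P^\star$, and (ii) for every $Q \in \mathcal{AP}_k$ with $Q \subseteq P^\star$ and every $\omega \in \Theta$, there exists $n \in Q$ with $a(n) = \omega$. Part (i) is symmetric to the upper-bound construction. For part (ii), fix $s^\star \in S_{i^\star, j^\star}$ with $\tau(s^\star) = \omega$ and $Q \subseteq P^\star$ defined by prefix $u$, suffix $v$, length $\equiv \ell \bmod m$, residue $\equiv c \bmod q$. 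I would seek $n \in Q$ with $t(n) = s^\star$. Writing $(n)_k = u w v$, the requirement becomes $\delta(s_1, w) = s''$ where $s_1 := \delta(s_0, u)$ and $s''$ is the unique state with $\delta(s'', v) = s^\star$. Using primitivity, pick fixed words $w_1$ with $\delta(s_1, w_1) = s_0$ and $w_2$ with $\delta(s_0, w_2) = s''$, then look for $w = w_1 w_0 w_2$ where $w_0$ is a loop at $s_0$ with suitable length modulo $m$ and value modulo $q$. Lemma~\ref{lem:alph:path-exists} supplies such $w_0$ provided the residue required of $[w_0]_k$ modulo $q$ lies in the subgroup $h\ZZ/q\ZZ$.

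The main obstacle is verifying this residue-subgroup condition. It requires careful bookkeeping of how the class structure $(C_j)$ interacts with the residue modulo $q$: the hypotheses $s_1 \in C_{j_1}$, $s'' \in C_{j_2}$, the length constraint $|uwv| \equiv \ell \bmod m$, and the relation $n \equiv c \bmod q$ together determine what $[w_0]_k \bmod q$ must equal, and one must show that this value lies in $h\ZZ/q\ZZ$. The key input is the maximality of $h$ in Definition~\ref{def:alpha:height} combined with the compatibility $n \equiv j^\star \bmod h$ built into $P^\star$ and the fact $s^\star \in C_{j^\star}$. Once this is verified, Lemma~\ref{lem:alph:path-exists} completes the construction, yielding $n \in Q$ with $t(n) = s^\star$ and hence $a(n) = \omega$.
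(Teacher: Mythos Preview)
Your overall strategy matches the paper's, but there are two genuine gaps.

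First, for the upper bound, your description of the automaton on $\mathcal{M}$ is incorrect. The induced transitions on $\mathcal{M}$ are \emph{not} uniform cyclic shifts, even after passing to a power of $k$: in the paper's own example ($k=3$), digit $1$ sends both $M_0$ and $M_1$ to $M_1$, while digit $2$ sends both to $M_0$. The correct reason that $i(n)$ can be made constant on some $Q \in \mathcal{AP}_k$ with $Q \subset P$ is that the sequence $(i(n))_n$ is \emph{forwards synchronising}: any word $w$ with $\#\delta(S,w)=c$ satisfies $\delta(M_{i'},w) \subseteq \delta(S,w)$ for every $i'$, and by minimality this forces $\delta(M_{i'},w)=\delta(S,w)$, so $w$ synchronises the $\mathcal{M}$-automaton. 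Constancy of $i(n)$ is then obtained by a suffix condition, which is admissible in $\mathcal{AP}_k$.

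Second, for the lower bound, the state $s''$ with $\delta(s'',v)=s^\star$ is neither unique nor a priori existent; you need an argument here, and this is exactly where the minimality of $M_{i^\star}$ enters. After shrinking $Q$ so that $v$ contains a synchronising word for $\cA$, one has $\delta(S,v)=M_{i^\star}$, and since every state is reachable from $s_0$, the set $\{\delta(s_0,wv):w\in\Sigma_k^*\}$ equals $\delta(S,v)=M_{i^\star}\ni s^\star$. The paper streamlines this by first prolonging $u$ so that $\delta(s_0,u)=s_0$ and then choosing $w$ with $\delta(s_0,wv)=s^\star$ directly, bypassing the intermediate $s''$ altogether.

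Once these two points are repaired, the remainder of your outline---inserting a loop $w_0$ at $s_0$ via Lemma~\ref{lem:alph:path-exists} and checking, using $k\equiv 1 \bmod h$ together with $s^\star\in C_{j^\star}$, that the required residue lies in $h\ZZ/q\ZZ$---is exactly the paper's argument.
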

\begin{proof}
	The inequality 
	\begin{equation}\label{eq:alph:30:05}
		r(a) \leq \max_{i,j} \#\{\tau(s): s \in S_{i,j}\}
	\end{equation}
	is relatively simple. Indeed, since $(i(n))_n$ is synchronising and $(j(n))_n$ is periodic, for each $P \in \mathcal{AP}_k$ we can find $Q \in \mathcal{AP}_k$ with $Q \subset P$ and $0 \leq i < p$ and $0 \leq j < h$ such that $i(n) = i$ and $j(n) = j$ for all $n \in Q$. 
	Hence, $t(n) \in S_{i,j}$ and $a(n) \in \{\tau(s): s \in S_{i,j}\}$ for all $n \in Q$, which implies that $r(a) \leq \#\{\tau(s): s \in S_{i,j}\}$, and \eqref{eq:alph:30:05} follows.
	
	It remains to prove the reverse inequality
	\begin{equation}\label{eq:alph:30:06}
		r(a) \geq \max_{i,j} \#\{\tau(s): s \in S_{i,j}\}.
	\end{equation}
	Pick a minimal set $M_i \in \mathcal{M}$ and a residue $j \bmod h$ ($0 \leq i < p$, $0 \leq j < h$). There exists $P \in \mathcal{AP}_k$ such that for all $n \in P$ we have $i(n) = i$ and $j(n) = j$. We will show that for each $Q \in \mathcal{AP}_k$ with $Q \subset P$ and each $s \in S_{i,j}$ there exists $n \in Q$ such that $t(n) = s$. Since $i$ and $j$ were arbitrary, once this is accomplished, \eqref{eq:alph:30:06} will follow. 
	
	Replacing $Q$ with a smaller set if needed, we can assume that $Q$ takes the form 
	\[ 
		Q = \set{n \in \NN_0}{ n \equiv j' \bmod q,\ (n)_k \in u\Sigma_K^*v}
	\]
	for some $K$ that is a power of $k$, $u,v \in \Sigma_k^*$ and some $0 \leq j' < q$ with $q$ coprime to $k$. (Here and elsewhere, we identify $\Sigma_K$ with $\Sigma_k^{\log_k K}$.) Without loss of generality we may assume that $h \mid q$ and thus $j' \equiv j \bmod h$. Prolonging $u$ if necessary, we may freely assume that $\delta(s_0,u) = s_0$. It follows from the minimality of $M_i$ and the fact that all states in $S$ are reachable from $s_0$ that there exists $w \in \Sigma_k^*$ such that $\delta(s_0,wv) = s$; indeed, otherwise $\delta(S,v)$ would be a proper subset of $M_i$. By primitivity, we can assume that $w \in \Sigma_K^*$. Now, the definition of $C_j$ implies that $[u]_k + [w]_k + [v]_k \equiv j \bmod h$.
	 By Lemma \ref{lem:alph:path-exists}, for each $m \in \ZZ$ we can find $w_m' \in \Sigma_K^*$ of some length $\ell(m)$ divisible by $\log_k K$ and such that $\delta(s_0,w_m') = s_0$, $[w_m']_k \equiv h m \bmod q$ and $k^{\ell(m)} \equiv 1 \bmod q$. It remains to note that for all $m$ we have $\delta(s_0,u w_m' w v) = s$ and there exists $m$ such that $[u w_m' w v]_k \equiv j' \bmod q$.  
\end{proof}

\begin{proposition}
	Let $a$ be a primitive automatic sequence. Then
	\begin{align*}
		p_a^{\leq d} (\ell) \leq r^{\ell} p_{(i,j)}^{\leq d} (\ell) \leq r^{\ell} \exp\rb{o(\ell)}.
	\end{align*}
\end{proposition}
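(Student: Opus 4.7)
The plan is to prove the two inequalities in turn. For the first, $p_a^{\leq d}(\ell) \leq r^\ell p_{(i,j)}^{\leq d}(\ell)$, the key input is Proposition \ref{prop:alph:char-of-r}, which identifies $r$ with $\max_{i,j}\#\{\tau(s) : s \in S_{i,j}\}$. I would fix any polynomial $P \in \RR[X]$ with $\deg P \leq d$ and $P(\NN_0) \subset \NN_0$, and decompose the word $(a(P(n)))_{n=0}^{\ell-1}$ into two layers of data: first, the auxiliary $(i,j)$-word $((i(P(n)), j(P(n))))_{n=0}^{\ell-1}$, for which there are at most $p_{(i,j)}^{\leq d}(\ell)$ options by definition; and second, conditional on the auxiliary word, each value $a(P(n)) = \tau(t(P(n)))$, which must lie in $\{\tau(s) : s \in S_{i(P(n)), j(P(n))}\}$, a set of size at most $r$ by Proposition \ref{prop:alph:char-of-r}. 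Multiplying yields at most $r^\ell$ options per auxiliary word, giving the first inequality.

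For the second inequality $p_{(i,j)}^{\leq d}(\ell) \leq \exp(o(\ell))$, I would factorise $p_{(i,j)}^{\leq d}(\ell) \leq p_i^{\leq d}(\ell) \cdot p_j^{\leq d}(\ell)$, which is valid because any word in the product sequence is determined by the pair of its component words. Since $j(n) = n \bmod h$ is periodic, the residue $P(n) \bmod h$ is determined by $P(0), P(1), \ldots, P(d)$, so $p_j^{\leq d}(\ell) \leq h^{d+1}$. To bound $p_i^{\leq d}(\ell)$, I would show that $(i(n))$ is forwards synchronising and then invoke \eqref{eq:intro:DDMSS} to obtain $p_i^{\leq d}(\ell) \leq \exp(o(\ell))$.

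To verify the synchronising property, I would take any word $w \in \Sigma_k^*$ achieving the minimum $\#\delta(S, w) = c$. For any $M_l \in \mathcal{M}$, writing $M_l = \delta(S, w_l)$, we have $\delta(M_l, w) = \delta(S, w_l w)$, which has cardinality at most $|M_l| = c$ and at least $c$ by minimality of $c$, hence exactly $c$. Since also $\delta(M_l, w) \subset \delta(S, w)$ with both sets of size $c$, the two sets coincide: $\delta(M_l, w) = \delta(S, w)$ uniformly in $l$. It follows that $\delta(M_0, uwv) = \delta(\delta(S, w), v)$ depends only on $v$ for all $u, v \in \Sigma_k^*$, confirming that $w$ synchronises the automaton computing $(i(n))$. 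Combining the bounds yields $p_{(i,j)}^{\leq d}(\ell) \leq h^{d+1} \exp(o(\ell)) = \exp(o(\ell))$, completing the argument. The potentially delicate step is this synchronising verification, but it follows cleanly from the minimality of $c$ together with the fact that every minimal image has cardinality exactly $c$.
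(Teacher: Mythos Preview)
Your proposal is correct and follows essentially the same route as the paper: the first inequality via $t(n)\in S_{i(n),j(n)}$ and Proposition~\ref{prop:alph:char-of-r}, then the factorisation $p_{(i,j)}^{\leq d}(\ell)\leq p_i^{\leq d}(\ell)\cdot p_j^{\leq d}(\ell)$ with the periodic bound $h^{d+1}$ for the $j$-factor and the synchronising bound \eqref{eq:intro:DDMSS} for the $i$-factor (which the paper obtains by citing Proposition~5.8 of \cite{Deshouillers2022}). The only difference is that you spell out explicitly why $(i(n))_n$ is forwards synchronising, a fact the paper had already asserted in the proof of Proposition~\ref{prop:alph:char-of-r}; your verification via minimality of $c$ is correct.
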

\begin{proof}
	We recall that from the construction of $S_{i,j}$ that for all $n$ we have $t(n) \in S_{i(n),j(n)}$. In particular $a(n) \in \{\tau(s): s \in S_{i(n),j(n)}\}$.
	This already shows the first inequality. 
	For the second inequality we let $h$ denote the height of $a$ and have
	\begin{align*}
		 p_{(i,j)}^{\leq d} (\ell) \leq p_{i}^{\leq d} (\ell) \cdot  p_{j}^{\leq d} (\ell) \leq \exp(o(\ell)) \cdot h^{d+1},
	\end{align*}
	where the second inequality follows from Proposition 5.8 in~\cite{Deshouillers2022}. 
\end{proof}
\begin{remark}\label{remark:error}
The upper bound in Proposition 5.8 in~\cite{Deshouillers2022} can be improved by balancing the error terms more carefully.
We switch for this remark to the notation used in~\cite{Deshouillers2022} (i.e. $\ell$ is replaced by $H$).
If we let $\lambda$ grow with $H$ and ignore the estimates using $\varepsilon$, all the arguments can be kept essentially unchanged and we find the upper bound
\begin{align*}
	p^{\leq d}_{a}(H) &\ll \abs{\operatorname{Ker}_k(a)} \cdot \abs{\mathcal{A}}^{k^{\lambda(d+1)}} \cdot \binom{k^{\lambda}}{O(k^{\lambda(1-\eta)})} \cdot \abs{\mathcal{A}}^{(H/k^{\lambda}+1) k^{\lambda(1-\eta)}}\\
		&\ll \abs{\mathcal{A}}^{k^{\lambda(d+1)}} \cdot (k^{\lambda})^{O(k^{\lambda (1-\eta)})} \cdot \abs{\mathcal{A}}^{(H/k^{\lambda}+1) k^{\lambda(1-\eta)}}
\end{align*}
Choosing for example $k^{\lambda} = \floor{H^{1/(d+2)}}$ leads to
\begin{align*}
	p^{\leq d}_{a}(H) \leq \exp\rb{O\rb{H^{1-\eta/(d+2)}}}.
\end{align*}
\end{remark}

\subsection{General case}

We now deal with sequences that are not necessarily primitive. To begin with, we will need the following lemma.

\begin{lemma}\label{lem:alpha:partition}
	Let $w \in \Sigma_k^*$ be a word, let $P$ be a degree $d$ polynomial such that $P(\NN_0) \subset \NN_0$, and let $\ell \in \NN$. Then there exists $0 < \theta < 1$, dependent only on $\abs{w}$ and $d$, such that $[0, \ell)$ can be covered by $\ell^\theta$ intervals, each of which either contains exactly one integer point, or is contained in a set of the form $P^{-1}([m k^i, (m+1)k^i))$ where $m,i \in \NN_0$ and $w$ is a subword of $(m)_k$.
\end{lemma}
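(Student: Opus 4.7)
The plan is to construct the cover by descending the natural base-$k$ tree of dyadic-style blocks in the image of $P$, halting at blocks whose label contains $w$.

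First, I would reduce to the monotonic case by splitting $[0, \ell)$ into at most $d + 1$ sub-intervals on which $P$ is monotone; this costs only a constant additive number of covering intervals, and ensures that the pre-image under $P$ of any interval is an interval, so I may assume $P$ is monotone on $[0, \ell)$. Next I set $N = \lceil \max_{n \in [0,\ell)} P(n) \rceil + 1 \leq C_d \ell^d$ and $J = \lceil \log_k N \rceil$, and build a rooted $k$-ary tree whose nodes are blocks $B_{m,i} = [m k^i, (m+1) k^i)$, with root a block of scale $k^J$ containing $[0, N)$ and each internal node branching into its $k$ equal sub-blocks at scale $k^{i-1}$. A node $B_{m,i}$ is called \emph{good} if $w$ is a subword of $(m)_k$ and \emph{bad} otherwise. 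I would descend into every bad node and halt at every good node and at every singleton leaf (depth $J$).

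Each good leaf $B$ yields a single covering interval $P^{-1}(B) \cap [0, \ell)$ of the allowed form; each bad singleton $\{m\}$ with $m \in P([0, \ell))$ yields one singleton covering interval. For the count, I rely on the classical combinatorial fact that the number of $k$-ary strings of length $j$ not containing the fixed word $w$ as a subword is $O(\lambda^j)$ for some $\lambda = \lambda(|w|, k) < k$ (with implicit constant depending only on $|w|, k$). Summing over depths, this bounds the number of bad internal nodes at depth $j$, the number of good leaves through the whole tree, and the number of bad singleton leaves by $O(\lambda^J) = O(\ell^{d \log_k \lambda})$, giving a cover of size $\ell^{\theta_0}$ with $\theta_0 := d \log_k \lambda$.

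The main obstacle is that $\theta_0$ need not be less than $1$, since $\lambda$ approaches $k$ as $|w|$ grows (the exponent $\log_k \lambda$ is less than $1$ but can be arbitrarily close to it). To secure $\theta < 1$ in the general case I would combine the tree bound with the trivial bound $\ell$ (each covering interval covers at least one integer of $[0, \ell)$), exploiting the sparsity of $P([0, \ell))$ within $[0, N)$. Concretely, I would truncate the tree at an adaptively chosen depth $j^*$ and handle the remaining bad blocks at depth $j^*$ either recursively or by enumerating their integer pre-images as singletons; using that the pre-image under monotone $P$ of a block of size $k^i$ contains $O(k^i/P'_{\min})$ integers (with $P'_{\min}$ depending on the location of the block), one balances the number of good leaves produced through depth $j^*$ against the total number of points remaining in bad subtrees at depth $j^*$. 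A suitable choice of $j^*$ then yields a cover of size $\ell^\theta$ with $\theta < 1$ depending only on $|w|$ and $d$.
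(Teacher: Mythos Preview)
Your bound $N \leq C_d \ell^d$ is false: the coefficients of $P$ are unconstrained, so $N$ (and likewise the length $M$ of the image interval $P([0,\ell))$) can be arbitrarily large compared to $\ell$; for instance $P(n) = An$ with $A$ huge gives $N \approx A\ell$. Consequently your tree has depth $J \approx \log_k N$ unbounded in terms of $\ell$, and the leaf count $O(\lambda^J)$ is not $O(\ell^{\theta_0})$ for any $\theta_0$ depending only on $|w|$ and $d$. The obstacle you flag in your last paragraph is thus not merely that $\theta_0$ might exceed $1$; rather, no such $\theta_0$ exists at all.

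Your proposed fix---truncate at depth $j^*$ and cover the pre-images of the surviving bad blocks by singletons---is the right shape, but the phrase ``$O(k^i/P'_{\min})$ integers, with $P'_{\min}$ depending on the location of the block'' conceals the real difficulty: $P'$ can be arbitrarily close to zero on parts of $[0,\ell)$, and there a single bad block can have pre-image of length comparable to $\ell$. What is missing is a quantitative bound on the small-derivative set: writing $M' := \max_{[0,\ell]} |P'|$, one needs that $\{x \in [0,\ell] : |P'(x)| < \delta^{d-1} M'\}$ has measure $\ll_d \delta \ell$. The paper's proof is organised around exactly this estimate: it first excises the small-derivative set at a cost of $O(\varepsilon \ell)$ singletons, then works at a \emph{single} scale $K \approx M/R$ on the remaining set where $|P'| \geq \varepsilon^{d-1} M'$, so that each block's pre-image there has length $O(\ell/(\varepsilon^{d-1}R))$, and finally balances $\varepsilon$ against $R$ to obtain $\theta = d/(d+\lambda')$ for a suitable $\lambda' > 0$. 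Without such a derivative estimate your balancing cannot be made uniform in the polynomial $P$.
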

\begin{proof}
Let $P(n) = \alpha_0 + \alpha_1 n + \ldots + \alpha_d n^d$ and $P_1(n) = P(n) - \alpha_0$.
It follows from Lemma 5.6 in~\cite{Konieczny2023} that
\begin{align*}
	M := \max_{n \in [0,\ell]} \abs{P_1(n)}  = \Theta\rb{ \max_{1\leq j \leq d} \abs{\ell^j \alpha_j}}.
\end{align*}
Similarly we have 
\begin{align*}
		M' := \max_{n \in [0,\ell]} \abs{P'(n)}  = \Theta\rb{ \max_{1\leq j \leq d} \abs{j \ell^{j-1} \alpha_j}} = \Theta(M/\ell).
\end{align*}
Moreover, for each $\delta > 0$ we have
\begin{align}\label{eq:alph:87:05}
	\lambda\bra{\set{x \in [0,\ell] }{ \abs{P'(x)} < \delta^{d-1}M'}} \ll \delta \ell.
\end{align}
We note that~\eqref{eq:alph:87:05} also holds for $d = 1$ as the left hand side equals $0$.

Let $\e > 0$ be a small positive quantity, to be determined in the course of the argument, and let
\begin{align*}
I = \set{x \in [0, \ell] }{ \abs{P'(x)} > \varepsilon^{d-1} M'}.
\end{align*}
We note that $I$ is a union of at most $d$ intervals and it follows from \eqref{eq:alph:87:05} that
\begin{align}\label{eq:alph:87:06}
\lambda( [0,\ell] \setminus I) \ll \varepsilon \ell.
\end{align}
(In the case where $d = 1$ we have $P'(n) = M' = \a_1$, whence \eqref{eq:alph:87:06} is trivially true and $I = [0,\ell]$.) 

Let $R > 0$ be a large real number, to be determined in the course of the argument, and let $K = k^i$ be a power of $K$ such that $K \leq M/R < k K$. Recall that $P([0,\ell])$ is an interval of length at most $2M$, and hence can be covered with $O(R)$ intervals of the form $J_m := [mK,(m+1)K)$. For each $m$, the set $I \cap P^{-1}(J_m)$ is a union of $O(d)$ intervals. If the base-$k$ expansion $(m)_k$ contains $w$ as a subword then these intervals satisfy the required conditions; we will call such intervals ``good''.

We cover the remaining part of $[0,\ell)$ with singletons. Thus, it remains to estimate the number of integers in $[0,\ell)$ not covered by ``good'' intervals. These integers fall into two categories. Firstly, we have the elements of $[0,\ell) \setminus I$, whose number can be estimated by \eqref{eq:alph:87:05}. Secondly, we have the ``bad'' intervals, corresponding to the intervals $J_m$ such that $(m)_k$ does not contain $w$. The number of such ``bad'' values of $m$ is $O(R^{1-\lambda})$ for some $\lambda > 0$ (dependent only on $k$ and $\abs{w}$). Recall that for each $m$ the set $I \cap P^{-1}(J_m)$ is a union of $O(d)$ intervals, each of length 
$O({\ell}/\bra{\e^{d-1} R}+1)$. 
Thus, in total, the number of points and ``bad'' intervals we obtain is, up to a constant, bounded by
\begin{align*}
	\varepsilon \ell + R^{1-\lambda} \cdot \frac{ \ell}{\e^{d-1} R} + R^{1-\lambda}
	=  \ell \cdot \bra{ \varepsilon + 1/\bra{\e^{d-1} R^\lambda}} + R^{1-\lambda}.
\end{align*}
Optimising, we are lead to choose
\begin{align*}
	R &= \ell^{\frac{d}{d+\lambda}}, &
	\e &= R^{- \frac{\lambda}{d}} = \ell^{- \frac{\lambda}{d+\lambda}}.
\end{align*}
This finishes the argument, with $\theta = \frac{d}{d+\lambda}$.
\end{proof}

We are now ready to prove Theorem \ref{thm:main-B} in full generality.
\begin{proof}[Proof of Theorem \ref{thm:main-B}, non-primitive case]
	Let $\cA = (S,s_0,\Sigma_k,\delta,\Omega,\tau)$ be an automaton which computes $a$. Replacing $k$ with a power, we may freely assume that $\delta(s,00) = \delta(s,0)$ for all $s \in S$.
	It follows directly from Definition \ref{def:eff-alph} that
\begin{align*}
	r = r(a) = \max \set{r(a')}{a' \text{ is computed by a final component of } \cA}.
\end{align*}
	Recall that we have already proved that for each sequence $a'$ computed by a final component of $\cA$, starting from a state $s_0'$ with $\delta(s_0',0) = s_0'$, we have 
\begin{align*}
	p_{a'}(\ell) 
	\leq r(a')^{\ell}\exp(f(\ell))  
	\leq r^{\ell}\exp(f(\ell)),
\end{align*}
where $f \colon \NN \to \RR_{\geq 0}$ is some function with $f(\ell)/\ell \to 0$ as $\ell \to \infty$. We may freely assume that $f$ is defined on $[0,\infty)$, $f(0) = 0$ and that $f$ is concave.\footnote{If $f$ is not concave, consider the area $A = \set{(x,y)}{ x \geq 0,\ 0 \leq y \leq f(x)}$ below the graph of $f$. The closure of the convex hull of $A$ is the area below the graph of some concave $g$. Directly from construction, $g$ is concave and $g \geq f$. It is routine to verify that $g(\ell)/\ell \to 0$ as $\ell \to \infty$.}
Let $w$ be a word such that $\delta(s,w)$ belongs to a final component of $\cA$ for each $s \in S$ (cf. \cite[Lem.\ 3.1]{ByszewskiKoniecznyMullner}). Replacing $w$ with $w0$ if necessary, we may further assume that for each $s \in S$ the state $s' := \delta(s,w)$ belongs to a final component and satisfies $\delta(s',0) = s'$.

Let $\ell$ be a large integer and let $P$ be a degree $d$ polynomial with $P(\NN_0) \subset \NN_0$. By Lemma \ref{lem:alpha:partition}, we can partition $[0,\ell)$ into $R \ll \ell^{\theta}$ intervals $I$ that either are singletons or are contained in $P^{-1}([m k^i,(m+1)k^i))$ for some $m,i \in \NN_0$ such that $w$ is a subword of $(m)_k$. In the latter case, for $n \in I$ we have
\[
	a(P(n)) =  a'( P'(n))
\]	
for a sequence $a'$ computed by a strongly connected component of $\cA$, starting from a state $s_0'$ with $\delta(s_0',0) = s_0'$ and a polynomial $P'$ with $P'(\NN_0) \subset \NN_0$. (To be more precise, we can find integers $m',i' \in \NN_0$ such that $(m')_k$ ends with $w$ and $[m k^i, (m+1) k^i ) \subset [m' k^{i'}, (m'+1) k^{i'} )$; thus replacing $m,i$ with $m',i'$ we may freely assume that $(m)_k$ ends with $w$. Let $s_0' = \delta(s_0,(m)_k)$, let $a'$ be the sequence computed by $\cA$ starting from the state $s_0'$, and let $P'(n) = P(n) - m k^i$. Then $a(P(n)) = a'(P'(n))$ for $n \in I$. One remaining problem is that $P'$ could take negative values outside of $I$. To overcome it, we replace $P'(n)$ with $P'(n) + h k^j$, where $j$ is sufficiently large that $k^j > (m+1) k^i$ and $h > 0$ is an integer such that $\delta(s_0',(h)_k) = s_0'$, which exists by strong connectivity.) 
Above, $\theta \in (0,1)$ is a constant which depends only on $\abs{w}$ and $d$. The number of partitions, as described above, is $\ell^{O(R)}$. Fix one such partition. For each singleton $\{n\}$ we have at most $\# \Omega$ possible values of $a(P(n))$. For each non-degenerate interval of length $\ell_i$ the number of possible values taken by $a(P(n))$ is at most $r^{\ell_i}\exp(f(\ell_i))$. Note that, by concavity, we have $\sum_{i} f(\ell_i) \leq R f(\ell/R)
 $, where the sum runs over all lengths $\ell_i$ of non-degenerate intervals involved in the partition. In total, we obtain the estimate
\begin{align*}
	p_{a}(\ell) &\leq \ell^{O(R)} \cdot \#\Omega^R \cdot r^{\ell} \cdot \exp\bra{R f(\ell/R)}
	 \\&= r^{\ell} \cdot \exp\bra{R(O(\log(\ell) + f(\ell/R))}
	 = r^{\ell} \cdot \exp\bra{o(\ell)}
	 .
\end{align*}
(In the last transition, we used the fact that $\ell/R \to \infty$ and consequently also $Rf(\ell/R)/\ell \to 0$ as $\ell \to \infty$.)
\end{proof}


\bibliographystyle{alphaabbr}
\bibliography{bibliography}
\end{document}